\DeclareMathOperator{\dist}{dist}
\newtheorem{nonsec}[equation]{}
\newtheorem{thm}[equation]{Theorem}
\newtheorem{lem}[equation]{Lemma}
\newtheorem{rem}[equation]{Remark}
\newtheorem{prop}[equation]{Proposition}
\numberwithin{equation}{section}
\begin{document}

\title{A new intrinsic metric and quasiregular maps}

\author[M. Fujimura]{Masayo Fujimura}
\address{Department of Mathematics, National Defense Academy of Japan, Japan}
\email{masayo@nda.ac.jp}

\author[M. Mocanu]{Marcelina Mocanu}
\address{Department of Mathematics and Informatics, Vasile Alecsandri
         University of Bacau, Romania}
\email{mmocanu@ub.ro}

\author[M. Vuorinen]{Matti Vuorinen}
\address{Department of Mathematics and Statistics, University of Turku,
         Turku, Finland}
\email{vuorinen@utu.fi}

\keywords{Hyperbolic metric, Triangular ratio metric,
          Quasiconformal map, Quasiregular map}
   \subjclass[2010]{30C20, 30C15, 51M99}
\date{}

\begin{abstract}
  We introduce a new intrinsic metric
in subdomains of a metric space and give upper and lower bounds for it in terms
of well-known metrics.
  We also prove  distortion results for this  metric 
  under quasiregular maps.
\end{abstract}

\maketitle

\section{Introduction}

\label{section1}
\setcounter{equation}{0}

Distance functions specific to a domain
$G \subset \mathbb{R}^n, n\ge2\,,$ or, as we call them,
intrinsic metrics, are some of the key notions of geometric function theory
and are currently studied by many authors.
See for instance the recent monographs \cite{gh,gmp,hkv,p} and
papers \cite{bh,dhv,fmv,h1,h2,himps}.
In \cite{gh} intrinsic metrics are used as a powerful tool to analyse
the properties of quasidisks and \cite{hkv} provides a survey of some
recent progress in the field.
A list of twelve metrics recurrent in geometric function theory
is given in \cite[pp. 42-48]{p}.

In the classical case $n=2$ one can define the hyperbolic metric
of a simply connected domain by use of a conformal mapping given
by the Riemann mapping theorem and the hyperbolic metric of
the unit disk \cite{bm}.
This metric is conformally invariant and therefore a most useful tool.
For  dimensions $n\ge 3,$ by Liouville's theorem, conformal mappings
$f \colon D \to D'$ of domains $D, D' \subset \mathbb{R}^n$ are of the form
$f= g | D$ where $g$ is a M\"obius transformation \cite[pp. 64-75]{gmp}, 
\cite[pp.11-12]{hkv},
and therefore there is no counterpart of the Riemann mapping theorem,
and the planar procedure is not applicable.
This state of affairs led many researchers to look for generalized hyperbolic 
geometries and metrics which
share at least some but not all properties of the hyperbolic metric 
\cite[Ch. 5]{hkv}. For instance,
the quasihyperbolic and distance ratio metrics studied in
\cite{bh,gh,gmp,hkv} do not enjoy the full conformal invariance
property for any dimension $n\ge 2,$ both are invariant under
similarity transformations only.

Here we study a function recently used  as a tool by  O. Dovgoshey, P. Hariri,
  and M. Vuorinen  \cite{dhv}
and show that this function satisfies the triangle inequality and, indeed,
defines an intrinsic metric of a domain.
Moreover, we compare it to the distance ratio metric and find
two-sided bounds for it. Finally, we study the behavior of this
metric under quasiconformal mappings.  

For a proper nonempty open subset $D \subset {\mathbb R}^n\,$
and for all $x,y\in D$, {\sl the  distance ratio metric}
$j_D$ 
is defined as
$$
 j_D(x,y)=\log \left( 1+\frac{|x-y|}{\min \{d_{D}(x),d_{D}(y) \} } \right)\,.
$$
For a proof of the triangle inequality,
see \cite[Lemma 3.3.4]{gh}, \cite[7.44]{avv}.
If there is no danger of confusion, we write
$d_D(x)=d(x)= d(x,\partial D)=\dist (x, \partial D)\,.$

In this paper our goal is to prove that
the expression \eqref{dhvfun2}  studied in \cite{dhv}
for $ (X,\rho)=(D,j_D) $
is, in fact, a metric. We also prove several upper and lower
bounds for this new metric.

\begin{thm} \label{dhvfun}
  Let $(X,\rho )$  be a metric space and for  $x,y\in X, c>0\,,$ let
  \begin{equation} \label{dhvfun2}
    W(x,y):=\log \Big(1+2c\sinh \frac{\rho(x,y)}{2}\Big)\,.
  \end{equation}
  If $c\geq 1$, then $W$ is a  metric on $X$.
  Moreover, if $\rho =j_{{\mathbb{B}^2}}$ where ${\mathbb{B}^2}$ 
  is the unit disk, then
  $W$ is a metric on $\mathbb{B}^2$ if and only if $c\geq 1\,.$
\end{thm}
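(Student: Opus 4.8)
The plan is to reduce everything to a one-variable statement about the function $f(t)=\log(1+2c\sinh(t/2))$, $t\ge 0$. Nonnegativity, symmetry, and the identity-of-indiscernibles property of $W$ are immediate: $f$ is strictly increasing with $f(0)=0$ (since $\sinh$ is increasing and vanishes only at $0$, and $\log$ is increasing), so $W(x,y)=f(\rho(x,y))\ge 0$ with equality exactly when $\rho(x,y)=0$, and symmetry is inherited from $\rho$. The whole content is therefore the triangle inequality, and for that it suffices to prove that $f$ is subadditive on $[0,\infty)$, i.e. $f(s+t)\le f(s)+f(t)$. Indeed, once $f$ is increasing and subadditive, the metric triangle inequality for $\rho$ gives $W(x,z)=f(\rho(x,z))\le f(\rho(x,y)+\rho(y,z))\le f(\rho(x,y))+f(\rho(y,z))=W(x,y)+W(y,z)$.

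First I would note that the familiar shortcut via concavity is unavailable: a computation of $f''$ shows that $f$ is concave only while $\sinh(t/2)\le 2c$ and becomes convex for large $t$, so $f$ is not concave and the standard ``concave $\Rightarrow$ metric-preserving'' lemma does not apply. Instead I would attack subadditivity directly. Exponentiating $f(s+t)\le f(s)+f(t)$ and writing $A=s/2$, $B=t/2$, the inequality becomes $1+2c\sinh(A+B)\le(1+2c\sinh A)(1+2c\sinh B)$. Expanding the right-hand side, cancelling the $1$, dividing by $2c$, and using $\sinh(A+B)=\sinh A\cosh B+\cosh A\sinh B$ turns this into $\sinh A(\cosh B-1)+\sinh B(\cosh A-1)\le 2c\,\sinh A\sinh B$. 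For $s,t>0$ I would then divide by $\sinh A\sinh B>0$ and invoke the identity $(\cosh u-1)/\sinh u=\tanh(u/2)$ to arrive at the clean equivalent form $\tanh(s/4)+\tanh(t/4)\le 2c$ (the case where $s$ or $t$ is $0$ being trivial).

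This last inequality does all the work. Since $\tanh<1$ everywhere, its left-hand side is always $<2$, so for $c\ge 1$ it holds for all $s,t\ge 0$; this proves subadditivity, hence the first assertion that $W$ is a metric whenever $c\ge 1$. Moreover every step above is an equivalence, so $\tanh(s/4)+\tanh(t/4)\le 2c$ is an exact characterization of when $f(s+t)\le f(s)+f(t)$, and this is what will make the second assertion sharp.

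For the converse in the $\mathbb{B}^2$ statement I would produce an explicit failure of the triangle inequality when $c<1$, using that $j_{\mathbb{B}^2}$ is additive along a radius. For $0<x<y<z<1$ on the positive real axis one has $j_{\mathbb{B}^2}(x,y)=\log\frac{1-x}{1-y}$, and likewise for the other pairs, whence $j_{\mathbb{B}^2}(x,z)=j_{\mathbb{B}^2}(x,y)+j_{\mathbb{B}^2}(y,z)$. Choosing the points so that $s:=j_{\mathbb{B}^2}(x,y)=j_{\mathbb{B}^2}(y,z)=:t$ (e.g. $x=\tfrac12$, $y=1-\tfrac12 e^{-s}$, $z=1-\tfrac12 e^{-2s}$) and taking $s$ large enough that $\tanh(s/4)>c$ --- possible precisely because $c<1$ --- the characterization above gives $f(s+t)>f(s)+f(t)$, i.e. $W(x,z)>W(x,y)+W(y,z)$. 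Thus $W$ is not a metric for $c<1$, completing the equivalence. I expect the one genuinely nontrivial step to be the reduction to $\tanh(s/4)+\tanh(t/4)\le 2c$; after that both directions are essentially immediate, and it is precisely the sharp constant $2$ coming from $\sup\tanh=1$ that pins down the threshold $c=1$.
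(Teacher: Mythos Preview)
Your argument is correct, but it proceeds along a genuinely different (and considerably shorter) route than the paper.

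For the direction $c\ge 1\Rightarrow W$ is a metric, the paper does not attack subadditivity of $f(t)=\log(1+2c\sinh(t/2))$ directly. Instead it proves as a separate lemma (Lemma~\ref{propmm}) that $F_c(t)/t$ is strictly decreasing on $(0,\infty)$ if and only if $c\ge 1$, via a fairly lengthy analysis of the derivative and a limit computation, and then quotes the standard fact that a function with decreasing $f(t)/t$ is subadditive. Your reduction to the clean inequality $\tanh(s/4)+\tanh(t/4)\le 2c$ bypasses all of this and gives subadditivity in a few lines; it also gives for free the exact set of pairs $(s,t)$ at which subadditivity fails. The paper's longer route does buy something, though: the monotonicity of $F_c(t)/t$ is reused later (e.g., in Proposition~3.10 and in the proof of Theorem~\ref{Wqc}), so it has independent value beyond this theorem.

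For the converse when $c<1$, the paper also argues differently: it places the three points as $x=-y$, $0<y<z<1$, where $j_{\mathbb{B}^2}$ is \emph{not} additive, and then carries out a two-variable limit analysis with auxiliary functions $p,q,r,H$ to produce a triple violating the triangle inequality. Your choice of $0<x<y<z<1$ on a single radius exploits the exact additivity $j(x,z)=j(x,y)+j(y,z)$, so that the failure of the triangle inequality for $W$ is literally the failure of subadditivity of $f$, which you already characterized; this is more transparent.
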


\begin{thm} \label{thm110}
  Let $G$ be a proper subdomain of $\mathbb{R}^n\,.$
  The following inequality holds for all $x,y \in G$
  $$
    \frac{j_G(x,y)}{2} \le \log\Big(1+ 2 \sinh\frac{j_G(x,y)}{2}\Big) \le
    \min \left\{ j_G(x,y), \frac{j_G(x,y)}{2} + \log \frac{5}{4} \right\} \,.
  $$
\end{thm}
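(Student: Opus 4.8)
The plan is to observe that both inequalities depend on $x$ and $y$ only through the single nonnegative number $j_G(x,y)$, so the whole statement reduces to a one-variable inequality. Writing $t := j_G(x,y) \ge 0$, it suffices to prove that for every $t \ge 0$
$$\frac{t}{2} \le \log\Big(1 + 2\sinh\frac{t}{2}\Big) \le \min\Big\{t,\ \frac{t}{2} + \log\frac{5}{4}\Big\}.$$
The central device will be the substitution $u := e^{t/2}\ge 1$. Since $2\sinh(t/2)=e^{t/2}-e^{-t/2}$, the middle expression becomes $\log(1+u-u^{-1})$, while $t = 2\log u$ and $t/2 = \log u$. After exponentiating, each of the three inequalities should turn into an algebraic inequality in $u$ that I expect to collapse to a perfect square, making everything elementary.

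First I would dispose of the lower bound. Exponentiating $t/2 \le \log(1+u-u^{-1})$ gives $u \le 1+u-u^{-1}$, i.e. $u^{-1}\le 1$, which is immediate from $u \ge 1$, with equality exactly at $u=1$ (that is, $t=0$).

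For the upper bound I would treat the two terms of the minimum separately, since proving the middle quantity is at most each of them yields the bound by their minimum automatically. For the bound by $t = 2\log u$, exponentiation gives $1+u-u^{-1}\le u^2$; clearing the denominator (legitimate as $u>0$) and factoring I expect to reach $(u-1)^2(u+1)\ge 0$, valid for all $u\ge 1$. For the bound by $t/2+\log\frac{5}{4}=\log(\frac{5}{4}u)$, exponentiation gives $1+u-u^{-1}\le \frac{5}{4}u$; clearing the denominator I expect the perfect square $(u-2)^2\ge 0$, valid for all real $u$.

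There is no genuine analytic obstacle here: once the substitution $u=e^{t/2}$ is made, every step is routine algebra, and the only care needed is to track the direction of the inequalities when multiplying through by $u$ and to record the equality cases. Those equality cases in fact confirm sharpness of the constants: the lower bound and the first upper term are equalities at $t=0$, while the second upper term is an equality at $u=2$, i.e. $j_G(x,y)=2\log 2=\log 4$, which explains why the constant $\log\frac{5}{4}$ cannot be improved.
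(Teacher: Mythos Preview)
Your proof is correct. The substitution $u=e^{t/2}$ works exactly as you describe: the three inequalities become $u^{-1}\le 1$, $(u-1)^2(u+1)\ge 0$, and $(u-2)^2\ge 0$, all trivially valid for $u\ge 1$, and your identification of the equality cases is right.

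The paper takes a different, less self-contained route. It assembles the result from three separately proved lemmas: the lower bound $t/2\le F_1(t)$ is the special case $c=1$ of a general inequality $\frac{t}{t+1}\log c+\frac{t}{2}\le F_c(t)$ (Lemma~\ref{lem316}(1)); the upper bound $F_1(t)\le t/2+\log(5/4)$ is the case $c=1$ of $F_c(t)\le \log(1+\tfrac{1}{4c})+\tfrac{t}{2}$ (Lemma~\ref{lem316}(2), derived via the AM--GM inequality, which is of course equivalent to your perfect square $(u-2)^2\ge 0$); and the upper bound $F_1(t)\le t$ is obtained indirectly, by chaining through the auxiliary metric $h_{D,1}$ of Lemma~\ref{dhv44}. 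Your argument is more elementary and entirely self-contained for $c=1$; the paper's detour has the advantage that the constituent lemmas are stated and proved for general $c$, which they need elsewhere.
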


\begin{figure}
  \centerline{\includegraphics[width=0.5\linewidth]{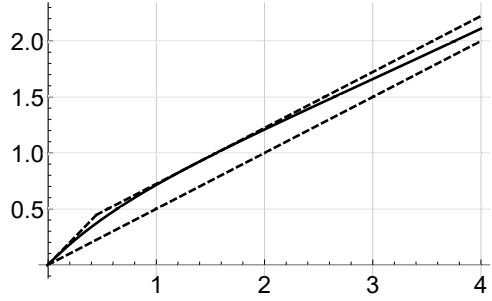}}
  \caption{The graphs of the functions $ y=\frac{t}2, y=\log(1+2\sinh\frac{t}2)$, and
           $ y=\min\{t,\frac{t}2+\log\frac54\}$ in Theorem \ref{thm110}.}
  \label{fig:1}
\end{figure}
We conclude our paper by studying the behavior of the metric 
of Theorem \ref{dhvfun} under 
quasiregular mappings defined on the unit disk and prove the following
result, which is based on a recent sharp version of the Schwarz lemma for 
quasiregular mappings for $n=2$ \cite{wv}.  

\begin{thm} \label{Wqc}
 Let $f\colon \mathbb{B}^{2}\rightarrow \mathbb{B}^{2}$ be a non-constant 
 $K$-quasiregular mapping, where $K\geq 1$.
 Denote by $\rho =\rho _{\mathbb{B}^{2}}$ the hyperbolic metric and
 let $W_{\lambda }\left( x,y\right)
 =\log\left( 1+2\lambda \sinh \frac{\rho \left( x,y\right) }{2}\right) $,
 where $ \lambda \geq 1$ and let $c(K)$ be the constant in 
 Theorem {\bf \ref{NewThm}}.
 For all $x,y\in \mathbb{B}^{2}$
 \begin{equation}
   W_{\lambda }\left( f\left( x\right) ,f\left( y\right) \right)
   \leq 2\lambda c(K)
   \max \left\{ W_{\lambda }\left( x,y\right) ^{1/K},
               W_{\lambda }\left(x,y\right) \right\} \label{distow}.
 \end{equation}
\end{thm}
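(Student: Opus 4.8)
The plan is to reduce the statement to a one–variable inequality by passing through the hyperbolic distances. Write $s=\sinh\frac{\rho(x,y)}{2}$ and $s'=\sinh\frac{\rho(f(x),f(y))}{2}$, so that $W_{\lambda}(x,y)=\log(1+2\lambda s)$ and $W_{\lambda}(f(x),f(y))=\log(1+2\lambda s')$. The argument then cleanly separates into an \emph{analytic input}, in which $s'$ is controlled by $s$ via the sharp quasiregular Schwarz lemma, and an \emph{elementary part}, in which that control is turned into the asserted inequality between the two values of $W_{\lambda}$.

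For the analytic input I would invoke Theorem~\ref{NewThm}, the $n=2$ sharp Schwarz lemma of \cite{wv}, in the form of a H\"older estimate $s'\le c(K)\max\{s^{1/K},s\}$ for the hyperbolic $\sinh$–quantities, with $c(K)\ge 1$. Since $t\mapsto\log(1+2\lambda t)$ is increasing, this immediately yields $W_{\lambda}(f(x),f(y))\le\log\bigl(1+2\lambda c(K)\max\{s^{1/K},s\}\bigr)$.

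Next I would extract the multiplicative constant by Bernoulli's inequality. Because $A:=2\lambda c(K)\ge 2\ge 1$, the bound $(1+u)^{A}\ge 1+Au$ gives $\log(1+Au)\le A\log(1+u)$ for all $u\ge 0$; applied with $u=\max\{s^{1/K},s\}$ this produces
\[
  W_{\lambda}(f(x),f(y))\le 2\lambda c(K)\,\log\bigl(1+\max\{s^{1/K},s\}\bigr)=2\lambda c(K)\max\{\log(1+s^{1/K}),\log(1+s)\}.
\]
It then remains to bound each term by $\max\{W_{\lambda}(x,y)^{1/K},W_{\lambda}(x,y)\}$. Writing $W:=W_{\lambda}(x,y)=\log(1+2\lambda s)\ge\log(1+s)$, the term $\log(1+s)$ is already $\le W$; so the crux is the elementary inequality $\log(1+t^{1/K})\le\max\{(\log(1+t))^{1/K},\log(1+t)\}$ for $t\ge 0$ and $K\ge 1$. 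Applied with $t=s$, together with $\log(1+s)\le W$ and the monotonicity of $r\mapsto\max\{r^{1/K},r\}$, this closes the chain and gives \eqref{distow}.

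I expect this last elementary inequality to be the main obstacle, since $W_{\lambda}$ depends only logarithmically on $s$ while the Schwarz estimate is a power law, and the two behaviours agree to first order near $0$ but diverge for large distances (which is exactly what forces the $\max$ on the right-hand side and forbids linearizing the logarithm). For $t\ge e-1$ one has $\log(1+t)\ge 1$ and $t^{1/K}\le t$, so the bound is immediate; the substantive range is $t\le e-1$, where one must show $(\log(1+t^{1/K}))^{K}\le\log(1+t)$. Substituting $w=t^{1/K}$, this becomes $(\log(1+w))^{K}\le\log(1+w^{K})$ on $0\le w\le(e-1)^{1/K}$, which I would settle by a short monotonicity argument for $w\le 1$ (using $\log(1+w)\le w$ and $w^{K}\le w$ to make the relevant derivative nonnegative) and, for $1\le w\le(e-1)^{1/K}$, by noting that then $\log(1+w)\le 1$ and $w\le w^{K}$, whence $(\log(1+w))^{K}\le\log(1+w)\le\log(1+w^{K})$. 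One caveat to record is that the resulting constant $2\lambda c(K)$ carries the factor $2\lambda$ coming from the Bernoulli step and is not claimed to be sharp; the genuine sharpness content lies entirely in the Schwarz constant $c(K)$ furnished by Theorem~\ref{NewThm}.
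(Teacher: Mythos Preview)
Your analytic input is misstated, and this is a genuine gap. Theorem~\ref{NewThm} asserts
\[
  \rho\bigl(f(x),f(y)\bigr)\le c(K)\max\{\rho(x,y)^{1/K},\rho(x,y)\},
\]
a bound on the hyperbolic distance itself, \emph{not} on the quantity $s=\sinh\frac{\rho(x,y)}{2}$. The version you invoke, $s'\le c(K)\max\{s^{1/K},s\}$, does not follow from the stated theorem and is in fact false in general: if $K>1$ then $c(K)>1$, and for large $r=\rho(x,y)$ one has $\sinh\bigl(c(K)r/2\bigr)\sim \tfrac12 e^{c(K)r/2}$, which dominates $c(K)\sinh(r/2)\sim \tfrac{c(K)}{2}e^{r/2}$ by an exponentially growing factor. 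Since the extremal behaviour in the Schwarz lemma can come arbitrarily close to $\rho(f(x),f(y))=c(K)\rho(x,y)$, no estimate of the form $s'\le C(K)\max\{s^{1/K},s\}$ with a constant depending only on $K$ is available. Everything downstream of this step (Bernoulli, the elementary lemma on $\log(1+t^{1/K})$) is then moot.

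The paper's argument avoids the $\sinh$ variable entirely and stays linear. From Lemma~\ref{propmm} one has the sandwich $\tfrac{t}{2}\le F_\lambda(t)\le\lambda t$, i.e.\ $\tfrac12\rho\le W_\lambda\le\lambda\rho$. Then
\[
  W_\lambda\bigl(f(x),f(y)\bigr)\le \lambda\,\rho\bigl(f(x),f(y)\bigr)
  \le \lambda c(K)\max\{\rho(x,y)^{1/K},\rho(x,y)\}
  \le \lambda c(K)\max\{(2W_\lambda)^{1/K},2W_\lambda\},
\]
and $2^{1/K}\le 2$ gives \eqref{distow}. If you want to salvage your route, replace the $\sinh$-level Schwarz step by this linear sandwich; the Bernoulli and $\log(1+t^{1/K})$ manoeuvres are then unnecessary.
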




\section{Preliminaries}

We recall the definition of  the hyperbolic
distance $\rho_{\mathbb{B}^n}(x,y)$ between two points
$x,y \in \mathbb{B}^n = \{x \in \mathbb{R}^n: |x|<1 \}$
\cite[Thm 7.2.1, p. 130]{be}:
\begin{equation}  \label{eq:tro}
  \tanh {\frac{\rho_{\mathbb{B}^n}(x,y)}{2}}
     =\frac{|x-y|}{\sqrt{|x-y|^2+(1-|x|^2)(1-|y|^2)}}\,.
\end{equation}
One of the main properties of the hyperbolic metric is its invariance
under a M\"obius self-mapping
$T_a\colon \mathbb{B}^n \to \mathbb{B}^n\,, $
with $T_a(a) = 0\,, |a|<1\,,$ of the unit ball $\mathbb{B}^n\,. $
 In other words, the mapping $T_a$
is an isometry.
By \cite[p.35]{be} we have for
$ x,y\in \mathbb{H}^n = \{ z \in \mathbb{R}^n: z_n>0\}$
\begin{equation}\label{cro}
  \cosh{\rho_{\mathbb{H}^n}(x,y)}=1+\frac{|x-y|^2}{2x_ny_n}\,.
\end{equation}

For $ D \in \{ \mathbb{B}^n, \mathbb{H}^n \}$ and all $x, y \in D$ we have by \cite[Lemma 4.9]{hkv}
\begin{equation}  \label{eq:jrho}
  j_D(x,y) \le \rho_D(x,y) \le 2 j_D(x,y) \,.
\end{equation}

By means of the Riemann mapping theorem one can extend
the definition of the hyperbolic metric to the case of simply
connected plane domains \cite[Thm 6.3, p. 26]{bm}.

\section{A new metric} \label{section3b}

\begin{thm}\label{th4} {\rm \cite{dhv}}
  Let $D$ be a nonempty open set in a metric space $(X, \rho)$ and
  let $\partial D \neq\varnothing$. Then the function
  $$
    h_{D,c}(x,y) = \log\left(1+c\frac{\rho(x,y)}{\sqrt{d_D(x)d_D(y)}}\right)\,,
  $$
  is a metric for every $c \ge 2$. The constant $2$ is best possible here.
\end{thm}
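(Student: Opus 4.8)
The non-trivial axiom is the triangle inequality; positivity, the identity of indiscernibles (the argument of the logarithm is $\ge 1$, with equality iff $\rho(x,y)=0$) and symmetry are immediate. The plan is first to clear the logarithm by exponentiating: writing $p=\rho(x,y)$, $q=\rho(y,z)$ and $d(\cdot)=d_D(\cdot)$, the inequality $h_{D,c}(x,z)\le h_{D,c}(x,y)+h_{D,c}(y,z)$ is equivalent, after multiplying out the product on the right, to
\[
  \rho(x,z)\le p\sqrt{\frac{d(z)}{d(y)}}+q\sqrt{\frac{d(x)}{d(y)}}+\frac{c\,pq}{d(y)}.
\]
Since $\rho(x,z)\le p+q$ by the triangle inequality in $(X,\rho)$, it suffices to prove this with $p+q$ in place of $\rho(x,z)$, i.e.
\[
  p\Big(1-\sqrt{\tfrac{d(z)}{d(y)}}\Big)+q\Big(1-\sqrt{\tfrac{d(x)}{d(y)}}\Big)\le\frac{c\,pq}{d(y)}.
\]

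The crux, which I would isolate as a lemma, is the one-point estimate: for all $u,v\in D$,
\[
  1-\sqrt{\frac{d(v)}{d(u)}}\le\frac{\rho(u,v)}{d(u)}.
\]
This follows because $d$ is $1$-Lipschitz ($|d(u)-d(v)|\le\rho(u,v)$, valid in any metric space since $d(\cdot)=\inf_{w\in\partial D}\rho(\cdot,w)$): if $d(v)\ge d(u)$ the left-hand side is $\le 0$, while if $d(v)<d(u)$ one uses $1-\sqrt{r}\le 1-r$ for $r=d(v)/d(u)\in(0,1]$ together with $d(u)-d(v)\le\rho(u,v)$. Applying the lemma to the pairs $(y,z)$ and $(y,x)$ bounds the left-hand side above by $p\cdot\frac{q}{d(y)}+q\cdot\frac{p}{d(y)}=\frac{2pq}{d(y)}$, which is $\le\frac{c\,pq}{d(y)}$ exactly when $c\ge 2$. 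This settles that $h_{D,c}$ is a metric for every $c\ge 2$, and the appearance of $2$ is transparent: it is the sum of the two identical boundary contributions.

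For sharpness the plan is to exhibit a single domain in which the triangle inequality fails once $c<2$, chosen so that every inequality used above is asymptotically an equality. I would take $X=\mathbb{R}^n$ with the Euclidean metric and $D=\mathbb{B}^n$, and put $y=0$, $x=(1-\varepsilon)e_1$, $z=-(1-\varepsilon)e_1$, so that $x,y,z$ are collinear with $\rho(x,z)=\rho(x,y)+\rho(y,z)$, while $d(y)=1$ and $d(x)=d(z)=\varepsilon$. A direct substitution collapses the required inequality to $2\le 2\sqrt{\varepsilon}+c(1-\varepsilon)$, whose right-hand side tends to $c$ as $\varepsilon\to 0$; hence the triangle inequality is violated for small $\varepsilon$ whenever $c<2$, proving that the constant $2$ is best possible. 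I expect the only delicate points to be the bookkeeping in the expansion of the product and the verification that this configuration simultaneously saturates the triangle inequality in $(X,\rho)$ and the Lipschitz estimate for $d$; the analytic content is entirely contained in the elementary inequality $1-\sqrt{r}\le 1-r$.
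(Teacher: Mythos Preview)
The paper does not prove this statement; Theorem~\ref{th4} is quoted from \cite{dhv} with no proof given here, so there is nothing to compare against in this paper. That said, your argument is correct and is in fact the standard one. The reduction via exponentiation is right: after cancelling the $1$'s, dividing by $c$, and multiplying through by $\sqrt{d(x)d(z)}$, the triangle inequality for $h_{D,c}$ is exactly
\[
\rho(x,z)\le p\sqrt{\tfrac{d(z)}{d(y)}}+q\sqrt{\tfrac{d(x)}{d(y)}}+\tfrac{c\,pq}{d(y)},
\]
and your one-point lemma $1-\sqrt{d(v)/d(u)}\le \rho(u,v)/d(u)$ (from $1$-Lipschitzness of $d$ and $1-\sqrt{r}\le 1-r$ on $[0,1]$) applied with $u=y$ finishes it, producing the constant $2$ as the sum of the two identical bounds. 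The sharpness example in $\mathbb{B}^n$ with $y=0$, $x=(1-\varepsilon)e_1$, $z=-x$ is also fine: since $\log$ is strictly increasing, the exponentiated inequality is equivalent to the original, and the reduced inequality $2\le 2\sqrt{\varepsilon}+c(1-\varepsilon)$ fails for small $\varepsilon$ once $c<2$.

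One cosmetic point: in the sharpness paragraph you might state explicitly that in this configuration $\rho(x,z)=p+q$, so the preliminary use of the triangle inequality in $(X,\rho)$ loses nothing and the exponentiated inequality is genuinely equivalent to the triangle inequality for $h_{D,c}$ at these three points, not merely sufficient for it.
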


This metric is listed in \cite{dd} and it has found some applications
in \cite{nl}.


\begin{prop}  {\rm \cite[Prop. 2.7]{dhv}} \label{dhvProp}
  For $c, t>0$, let
  $$ F_c(t)=\log\left(1+2c \sinh{\frac{t}{2}} \right).$$
  Then the double inequality
  $$
     \frac{c}{2(1+c)}t< F_c(t) < ct
  $$
  holds for
  $c\geq \frac{1}{2}$ and $t>0\, . $
\end{prop}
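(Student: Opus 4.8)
The plan is to treat the two inequalities separately. In each case I exponentiate to remove the logarithm and reduce the claim to the positivity of an auxiliary function that vanishes at $t=0$, which I then obtain by signing its derivative on $(0,\infty)$.

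For the upper bound $F_c(t)<ct$, I pass to the equivalent inequality $1+2c\sinh\tfrac{t}{2}<e^{ct}$ and set $g(t)=e^{ct}-1-2c\sinh\tfrac{t}{2}$. Then $g(0)=0$ and $g'(t)=c\bigl(e^{ct}-\cosh\tfrac{t}{2}\bigr)$. Since $c\ge\tfrac12$ gives $e^{ct}\ge e^{t/2}$, while $\cosh\tfrac{t}{2}<e^{t/2}$ for $t>0$, I get $g'(t)>0$ on $(0,\infty)$, so $g(t)>0$ and the strict upper bound follows. This is exactly where the hypothesis $c\ge\tfrac12$ enters, and it cannot be dropped: for $c<\tfrac12$ the estimate fails for large $t$, because $F_c(t)/t\to\tfrac12>c$ as $t\to\infty$.

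For the lower bound, write $\alpha=\tfrac{c}{2(1+c)}$ and pass to $e^{\alpha t}<1+2c\sinh\tfrac{t}{2}$, setting $h(t)=1+2c\sinh\tfrac{t}{2}-e^{\alpha t}$, so that $h(0)=0$ and $h'(t)=c\cosh\tfrac{t}{2}-\alpha e^{\alpha t}$. The key observation is that $\alpha<\tfrac12$ and $\alpha<\tfrac{c}{2}$. Using the crude but sufficient bound $\cosh\tfrac{t}{2}>\tfrac12 e^{t/2}$, it then suffices to check $\tfrac{c}{2}e^{t/2}>\alpha e^{\alpha t}$, which rearranges to $(1+c)>e^{(\alpha-\frac12)t}$; this is immediate since the exponent is negative and $1+c>1$. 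Hence $h'(t)>0$ on $(0,\infty)$, giving $h(t)>0$ and the lower bound. (Note this argument in fact works for every $c>0$.)

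I expect the lower bound to be the main subtlety. Unlike the upper bound, here $h'(0)=c-\alpha>0$ but $h''(0)=-\alpha^2<0$, so $h'$ is positive yet initially decreasing; its global positivity therefore does not follow from the behavior at the origin and genuinely requires the comparison of exponentials above. The role of the specific constant $\tfrac{c}{2(1+c)}$ is precisely to sit safely below both $\tfrac12$ (which governs the growth as $t\to\infty$, where $F_c(t)/t\to\tfrac12$) and $\tfrac{c}{2}$ (which governs the slope at the origin, where $F_c(t)/t\to c$), and this is what lets a single monotonicity argument succeed uniformly for all $t>0$.
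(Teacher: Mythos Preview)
Your proof is correct. The paper itself does not prove this proposition: it is quoted verbatim from \cite[Prop.~2.7]{dhv} and used as a black box, so there is no ``paper's own proof'' to compare against. Your argument---exponentiate, reduce each bound to the positivity of an auxiliary function vanishing at $0$, and sign the derivative---is clean and self-contained; in particular, the lower-bound step $\tfrac{c}{2}e^{t/2}>\alpha e^{\alpha t}\iff 1+c>e^{(\alpha-\frac12)t}$ with $\alpha-\tfrac12=-\tfrac{1}{2(1+c)}<0$ is a neat way to handle the non-convex regime near the origin that you correctly flag.

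One contextual remark: the paper's Lemma~\ref{propmm} establishes, for $c\ge 1$, the sharper fact that $F_c(t)/t$ is strictly decreasing onto $(1/2,c)$, which immediately gives both inequalities with the improved lower constant $1/2$ in place of $c/(2(1+c))$. That proof proceeds by a more elaborate analysis of the sign of $g(t)=ct\cosh(t/2)/w(t)-\log w(t)$ and a limit computation at infinity. Your direct approach is more elementary and also covers the full range $c\ge\tfrac12$ of the proposition (and, as you observe, the lower bound even holds for all $c>0$), at the price of the weaker lower constant.
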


\begin{lem}\label{dhv44} {\rm\cite[Lemma 4.4]{dhv}}
  Let $D$ be a proper subdomain of $\mathbb{R}^n$.
  Then for $c>0$ and   $x,y \in D$
  $$
    \log\Big(1+2 c \sinh\frac{j_D(x,y)} {2}\Big)
    \le h_{D,c}(x,y)  \le c j_D(x,y)\,.
  $$
\end{lem}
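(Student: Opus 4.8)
The plan is to strip off the common outer logarithm (which is increasing) and reduce both bounds to algebraic inequalities in the three quantities $r=|x-y|$, $a=d_D(x)$, $b=d_D(y)$. Write $m=\min\{a,b\}$ and $M=\max\{a,b\}$, so that $j_D(x,y)=\log(1+r/m)$ and $h_{D,c}(x,y)=\log\bigl(1+cr/\sqrt{ab}\,\bigr)$ (taking $\rho(x,y)=|x-y|$). The one computation that makes everything transparent is to evaluate $2\sinh\tfrac{j_D}{2}$ in closed form. Using $e^{j_D}=1+r/m$ one gets $\sinh\tfrac{j_D}{2}=\tfrac12\bigl(e^{j_D/2}-e^{-j_D/2}\bigr)=\tfrac12\cdot\frac{e^{j_D}-1}{e^{j_D/2}}$, and after simplification
\begin{equation*}
  2\sinh\frac{j_D(x,y)}{2}=\frac{r}{\sqrt{m(m+r)}}\,.
\end{equation*}
Thus the left member of the lemma equals $\log\bigl(1+cr/\sqrt{m(m+r)}\,\bigr)$, and both required inequalities now compare logarithms of the same increasing shape $\log(1+cr\,s)$.

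For the lower bound I would compare $\log\bigl(1+cr/\sqrt{m(m+r)}\,\bigr)$ with $\log\bigl(1+cr/\sqrt{ab}\,\bigr)$. Since $s\mapsto\log(1+cr\,s)$ is increasing, this reduces to $1/\sqrt{m(m+r)}\le 1/\sqrt{ab}$, i.e. to $ab\le m(m+r)$. Writing $ab=mM$, this is exactly $M\le m+r$, which is nothing but the Lipschitz property $|d_D(x)-d_D(y)|\le|x-y|$ of the distance-to-boundary function. Hence the lower bound holds for every $c>0$, with equality iff $|d_D(x)-d_D(y)|=|x-y|$ or $x=y$.

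For the upper bound $h_{D,c}\le c\,j_D$ I would proceed in two steps. First, $\sqrt{ab}\ge m$ gives $cr/\sqrt{ab}\le cr/m$, so by monotonicity of $\log$ it suffices to prove $\log\bigl(1+c(r/m)\bigr)\le c\log(1+r/m)$, equivalently $1+c(r/m)\le(1+r/m)^{c}$ with $t=r/m\ge0$. This is Bernoulli's inequality $(1+t)^{c}\ge 1+ct$. I expect this to be the delicate point of the argument: Bernoulli runs in the required direction only for $c\ge1$, whereas for $0<c<1$ it reverses, so—unlike the lower bound—the upper bound is not uniform in $c>0$ and one must track carefully how the exponent enters. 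The sign analysis of $\phi(t)=c\log(1+t)-\log(1+ct)$, with $\phi(0)=0$ and $\phi'(t)=c(c-1)t/\bigl((1+t)(1+ct)\bigr)$, pinpoints exactly this dependence and can be used in place of Bernoulli should an explicit monotonicity argument be preferred.
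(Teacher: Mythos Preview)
The paper does not prove this lemma; it is quoted from \cite[Lemma~4.4]{dhv} and used only for $c=1$ (in the proof of Theorem~\ref{thm110}). So there is no original proof to compare against, and your argument must be judged on its own.

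Your treatment of the lower bound is correct and is exactly the natural route: the identity
\[
2\sinh\frac{j_D(x,y)}{2}=\frac{r}{\sqrt{m(m+r)}},\qquad r=|x-y|,\ m=\min\{d_D(x),d_D(y)\},
\]
reduces the inequality to $ab\le m(m+r)$, i.e.\ $M\le m+r$, which is the $1$-Lipschitz property of $x\mapsto d_D(x)$. This holds for every $c>0$.

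For the upper bound your argument via $\sqrt{ab}\ge m$ and Bernoulli is correct for $c\ge1$, and your hesitation about $0<c<1$ is not a defect of method but a detection of a genuine issue with the statement as printed. The upper bound $h_{D,c}\le c\,j_D$ is \emph{false} for $0<c<1$: take $D=\mathbb{H}^2$, $x=(0,1)$, $y=(1,1)$, so that $d_D(x)=d_D(y)=1$, $|x-y|=1$, hence $h_{D,c}=\log(1+c)$ and $c\,j_D=c\log 2$; for $c=\tfrac12$ one gets $\log\tfrac32\approx0.405>0.347\approx\tfrac12\log2$. Thus the correct range for the upper bound is $c\ge1$, which your Bernoulli/$\phi'(t)$ argument establishes cleanly. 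The paper's applications only use $c=1$, so nothing downstream is affected.
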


We will now prove that the expression on the left hand side of
the inequality of Lemma \ref{dhv44} satisfies the triangle inequality
and for that purpose we need the following refined form of Proposition
\ref{dhvProp} for $c \ge 1\,.$
This refined result and some of the lower bounds that will be proved
below for the function $F_c$ in Proposition \ref{dhvProp},
also lead to improved constants in some of the results of \cite{dhv}.

\begin{lem} \label{propmm}
  The function ${F_c(t)}/{t}$
  is decreasing from $(0,\infty) $
  onto $(1/2,c)\,$ if and only if $c\ge 1\,.$
\end{lem}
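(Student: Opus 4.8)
The plan is to study the sign of the derivative of $g(t):=F_c(t)/t$ through the auxiliary function $\phi(t):=tF_c'(t)-F_c(t)$, since $g'(t)=\phi(t)/t^2$, so that $g$ is decreasing on $(0,\infty)$ exactly when $\phi\le 0$ there. First I would record the two boundary limits of $g$ itself: using $\log(1+x)\sim x$ and $\sinh(t/2)\sim t/2$ as $t\to0^+$ gives $g(0^+)=c$, while the dominant term $2c\sinh(t/2)\sim ce^{t/2}$ as $t\to\infty$ gives $g(\infty)=\tfrac12$. Thus, once monotonicity is in hand, a continuous strictly decreasing $g$ automatically maps $(0,\infty)$ onto the open interval $(1/2,c)$, and this matching of endpoints already forces $c>1/2$.

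The core of the argument is controlling $\phi$. Clearly $\phi(0^+)=0$, and differentiating gives the clean identity $\phi'(t)=tF_c''(t)$, so the sign of $\phi'$ is exactly that of $F_c''$. I would then compute $F_c''$ explicitly: writing $F_c'(t)=\dfrac{c\cosh(t/2)}{1+2c\sinh(t/2)}$ and differentiating, the identity $\cosh^2-\sinh^2=1$ collapses the numerator, whence
\[
  F_c''(t)=\frac{c\big(\tfrac12\sinh(t/2)-c\big)}{\big(1+2c\sinh(t/2)\big)^2}\,.
\]
Consequently $F_c''$ is negative for $t<t_0$ and positive for $t>t_0$, where $t_0=2\operatorname{arcsinh}(2c)$. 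Hence $\phi$ decreases on $(0,t_0)$ and increases on $(t_0,\infty)$: it is valley-shaped, starting from $\phi(0^+)=0$. Therefore $\sup_{t>0}\phi(t)=\max\{\phi(0^+),\,\lim_{t\to\infty}\phi(t)\}=\max\{0,\,\lim_{t\to\infty}\phi(t)\}$, and the whole question reduces to the value of this last limit.

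The main obstacle — and the one computation that needs care — is the asymptotic evaluation of $\lim_{t\to\infty}\phi(t)$. Here I would use $F_c(t)=\log\big(ce^{t/2}(1+O(e^{-t/2}))\big)=\tfrac{t}{2}+\log c+o(1)$, together with the rearrangement $F_c'(t)=\tfrac12+\dfrac{2ce^{-t/2}-1}{2(1+2c\sinh(t/2))}$, which shows that $t\big(F_c'(t)-\tfrac12\big)\to0$ exponentially fast; subtracting gives $\lim_{t\to\infty}\phi(t)=-\log c$. Feeding this back, $\phi\le0$ throughout $(0,\infty)$ holds if and only if $-\log c\le0$, that is, if and only if $c\ge1$; when $c\ge1$ the valley shape in fact yields $\phi<0$ for every $t>0$, so $g$ is strictly decreasing and its range is the open interval $(1/2,c)$. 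Conversely, if $c<1$ then $\lim_{t\to\infty}\phi(t)=-\log c>0$, so $\phi$ turns positive for large $t$ and $g$ is eventually increasing, which rules out global monotonicity. This establishes the necessity of $c\ge1$ and completes the equivalence.
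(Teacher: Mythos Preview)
Your proof is correct and follows essentially the same approach as the paper: both analyze the sign of the numerator $\phi(t)=tF_c'(t)-F_c(t)$ of $(F_c(t)/t)'$, locate its unique minimum at $t_0=2\operatorname{arcsinh}(2c)$ via the sign change of $F_c''$, and then decide global monotonicity from the value of $\lim_{t\to\infty}\phi(t)=-\log c$. Your asymptotic evaluation of this limit (via $F_c(t)=t/2+\log c+o(1)$ and the identity $F_c'(t)-\tfrac12=\dfrac{2ce^{-t/2}-1}{2(1+2c\sinh(t/2))}$) is considerably cleaner than the paper's, which reaches the same value through the substitution $u=1/\sinh(t/2)$ and a longer calculation.
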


\begin{proof}
Let $w(t) = 1+2c\sinh (t/2)\,.$
Differentiation yields
$$
  \bigg(\frac{F_c(t)}{t}\bigg)^{\prime }=\frac{1}{t^{2}} g(t)\,,
  \quad g(t):=
  \bigg( \frac{ct\cosh (t/2)}{w(t)}-\log (w(t))\bigg) \,,
$$

$$
  g^{\prime }( t ) =\frac{ct}{2\left( w(t)\right) ^{2}}
  \bigg( \sinh \Big(\frac{t}{2}\Big) -2c\bigg) \,.
$$
The equation $\sinh \left( \frac{t}{2}\right) =2c$
has the unique solution 
$$
  t_{1}=2\log \left( 2c+\sqrt{4c^{2}+1}\right) >0\,.
$$

We have $g^{\prime }(t)<0$ for $0<t<t_{1}$ and $g^{\prime }(t)>0$
for $ t>t_{1}$.
Then $g$ is strictly decreasing on $\left[ 0,t_{1}\right] $ and
strictly increasing on $[t_{1},\infty )$.
Note that $g\left( t\right) <g\left( 0\right) =0$ for $0<t\leq t_{1}$.

Assume that the limit
$L(c):=\underset{t\rightarrow \infty }{\lim }g\left(t\right) \,$
is finite. Then:
\begin{enumerate}
\item[a)]
  if $L\left( c\right) \leq 0$, then $g\left( t\right) <L\left( c\right)
  \leq 0$ for $t>t_{1}$. In this case, $g\left( t\right) <0$ for all $t>0$.
  It follows that ${F_c(t)}/{t}$ is strictly decreasing.
\item[b)] if $L\left( c\right) >0$, then there exist a unique point
  $t_{2}>t_{1}$ such that $g\left( t_{2}\right) =0$.
  In this case $g\left( t\right) >0$ for all $t>t_{2}$.
  It follows that ${F_c(t)}/{t}$ is strictly decreasing on
  $ \left[ 0,t_{2}\right] $ and strictly increasing on $[t_{2},\infty )$.
\end{enumerate}

Now we compute $L(c)\,.$
Setting  $s=t/2$ we see that
$$
  L(c)=\lim_{s\rightarrow\infty}\left( \frac{2cs\cosh (s)}{w(s)}
      -\log(w(s))\right)\,.
$$
Now we use the change of variable
$\frac{1}{\sinh \left( s\right) }=u$ when $s>0\,.$
Then $1+2c\sinh (s)=\frac{u+2c}{u}$ and
$\cosh (s)=\sqrt{1+\frac{1}{u^{2}}}=\frac{\sqrt{u^{2}+1}}{u}$.
Moreover, $\sinh \left( s\right) =\frac{1}{u}>0$
implies $s=\log \left( \frac{1}{u}+\sqrt{1+\frac{1}{u^{2}}}\right) \,. $

Write $ v=2c\dfrac{\sqrt{u^2+1}}{u+2c} $. It follows that
\begin{align}
  L(c)
     &=\lim_{u\rightarrow 0}\left( v\log
          \bigg( \frac{1}{u}+\sqrt{1+\frac{1}{u^{2}}}\,\bigg)
          -\log \Big(1+\frac{2c}{u}\Big)\right)
       \label{1} \\
     &=\lim_{u\rightarrow 0}\left[ v\log
          \left( 1+\sqrt{u^{2}+1}\right) -\log \left( u+2c\right)
       +\left( 1-v\right) \log u\right] \text{,}  \notag
\end{align}
where
$$
    \lim_{u\rightarrow 0}v\log \left( 1+\sqrt{u^{2}+1}\right)
      =\log 2\,, \quad \lim_{u\rightarrow 0}\log \left( u+2c\right)
      =\log \left( 2c\right) \,.
$$

The limit
$ \lim_{u\rightarrow 0}\left( 1-v\right) \log u$ has the indeterminate
form \thinspace $0\cdot \infty $.
But
\begin{align}
  \underset{u\rightarrow 0}{\lim }\left( 1-v\right) \log u
    &=\underset{u\rightarrow 0}{\lim }\left( \frac{\left(
       u+2c\right) ^{2}-4c^{2}\left( u^{2}+1\right) }{\left( u+2c\right) \left(
       u+2c+2c\sqrt{u^{2}+1}\right) }\right) \log u  \label{2} \\
    &=\frac{1}{8c^{2}}\lim_{u\rightarrow 0}\Big(\left(
       1-4c^{2}\right) u^{2}\log u+4cu\log u\Big) \text{.}  \notag
\end{align}
Since $\underset{u\rightarrow 0}{\lim }u\log u=$
$\underset{u\rightarrow 0}{\lim }u^{2}\log u=0$,
(\ref{1}) and (\ref{2}) imply $L(c) =\log 2-\log \left( 2c\right) $.

\medskip

We have $L\left( c\right) \leq 0$ if $c\geq 1$ and $L\left( c\right) >0$
if $0<c<1$.

In conclusion, ${F_c(t)}/{t}$ is strictly decreasing on
$\left( 0,\infty\right) $ if and only if $c\geq 1$ and its limit values at
$0$ and $\infty$ follow easily.
\end{proof}

\bigskip
\begin{nonsec}{\bf Proof of Theorem \ref{dhvfun}.} {\rm
The proof for $c \ge 1$ follows readily from  Lemma \ref{propmm}
and a general property of metrics  \cite[7.42(1)]{avv}. The well-known fact
that $j_G(x,y)$ is a metric is recorded e.g. in \cite[7.44]{avv}.

We next show that for $c \in (0,1)$ the function
$$
  W(x,y) = \log\Big(1 + 2 c \, \sinh \frac{j_{{\mathbb{B}^2}}(x,y)}{2}\Big)
$$
fails to satisfy the triangle inequality in the unit disk ${{\mathbb{B}^2}}\,.$
Write
$$
  E(x,y) = 1 + \frac{|x-y|}{\min \{ 1-|x|, 1-|y|\}} \,.
$$
The inequality
$W\left( x,z\right) >W\left(x,y\right) +W\left( y,z\right) $
is equivalent to
\begin{equation}
  \frac{E\left( x,z\right) -1}{\sqrt{E\left( x,z\right) }}
  >\frac{E\left(x,y\right) -1}{\sqrt{E\left( x,y\right) }}
   +\frac{E\left( y,z\right) -1}{\sqrt{E\left( y,z\right) }}
   +c\frac{E\left( x,y\right) -1}{\sqrt{E\left(x,y\right) }}
    \frac{E\left( y,z\right) -1}{\sqrt{E\left( y,z\right) }}\text{.}
  \label{inJ1}
\end{equation}
Assume that $0<y<z<1$ and $x=-y$. In this case, (\ref{inJ1}) writes as
\begin{align*}
  &\frac{z+y}{\sqrt{\left( 1+y\right) \left( 1-z\right) }} \\
  & \  >\frac{2y}{\sqrt{\left( 1-y\right) \left( 1+y\right) }}
  +\frac{z-y}{\sqrt{\left( 1-y\right)\left( 1-z\right) }}
  +c\frac{2y\left( z-y\right) }{\left( 1-y\right)
    \sqrt{\left( 1+y\right) \left( 1-z\right) }}\text{,}
\end{align*}
which is equivalent to

\begin{equation*}
  H\left( y,z\right) :=p\left( y\right) z+q\left( y\right) \sqrt{1-z}-r\left(
  y\right) <0\text{,}
\end{equation*}
where
$$
  p\left( y\right)
     :=\frac{1}{\sqrt{1-y}}+\frac{2cy}{\left( 1-y\right)\sqrt{1+y}}
        -\frac{1}{\sqrt{1+y}}\,,
  \quad q\left( y\right)
     :=\frac{2y}{\sqrt{\left( 1-y\right) \left( 1+y\right) }}
$$
$$
  r\left( y\right)
    :=y\left(\frac{1}{\sqrt{1-y}}
       +\frac{2cy}{\left( 1-y\right) \sqrt{1+y}}
       +\frac{1}{\sqrt{1+y}}\right) \,.
$$

Note that
$$
  \underset{z\nearrow 1}{\lim }H\left( y,z\right) =p\left( y\right)
  -r\left( y\right) =\sqrt{1-y}+\frac{2cy}{\sqrt{1+y}}-\sqrt{1+y}
$$
and that
$\underset{y\nearrow 1}{\lim }\left( p\left( y\right)
 -r\left( y\right)\right)=\sqrt{2}\left( c-1\right) <0$.
Take $0<a<1$ such that $p\left(a\right) -r\left( a\right) <0$.
Since
$\underset{z\nearrow 1}{\lim }H\left(a,z\right)
 =p\left( a\right) -r\left( a\right) <0$,
we may choose $a<b<1$ such that $H\left( a,b\right) <0$.
The latter inequality implies}
$W\left(-a,b\right) >W\left( -a,a\right) +W\left( a,b\right) \,.$
\qed
\end{nonsec}

\bigskip

Our next result refines, for $(2c-1)t>1\,,$ the upper bound in Proposition \ref{dhvProp}.
Because Lemma \ref{mfbd} will not be used and its proof is
straightforward and tedious, its proof is placed in an appendix at the
end of the paper.

\begin{lem} \label{mfbd}
  The inequality
  \begin{equation}\label{eq:newtarget}
    F_c(t)
   \leq \frac12\frac{t^2+(2c+1)t}{t+1},
  \end{equation}
  holds for  $ c>0 $ and $ t>0 $.
\end{lem}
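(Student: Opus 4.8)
The plan is to eliminate the rational function on the right of \eqref{eq:newtarget} and collapse the whole two-parameter inequality to a single one-variable exponential estimate. First I would record the partial-fraction simplification
\[
\frac12\frac{t^2+(2c+1)t}{t+1}=\frac{t}{2}+\frac{ct}{t+1},
\]
so that \eqref{eq:newtarget} is equivalent to $g(t)\ge0$ on $(0,\infty)$, where
\[
g(t):=\frac{t}{2}+\frac{ct}{t+1}-\log\Big(1+2c\sinh\tfrac{t}{2}\Big).
\]
Since $g(0)=0$, it suffices to prove $g'(t)\ge0$ for every $t>0$.

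Next I would differentiate, obtaining
\[
g'(t)=\frac12+\frac{c}{(t+1)^2}-\frac{c\cosh(t/2)}{1+2c\sinh(t/2)},
\]
and clear denominators by multiplying through by the positive quantity $2(t+1)^2\bigl(1+2c\sinh(t/2)\bigr)$. Using $\cosh(t/2)-\sinh(t/2)=e^{-t/2}$, the $2c(t+1)^2$ terms combine and the numerator becomes a quadratic in $c$,
\[
Q(c)=4\sinh(\tfrac{t}{2})\,c^2+2\bigl(1-(t+1)^2e^{-t/2}\bigr)c+(t+1)^2,
\]
with $g'(t)\ge0\iff Q(c)\ge0$. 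Since the leading coefficient is positive for $t>0$, the condition $Q(c)\ge0$ for all real $c$ — in particular for every $c>0$ — is equivalent to the discriminant being nonpositive, and this removes the parameter $c$ from the problem entirely.

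The discriminant condition is $\bigl(1-(t+1)^2e^{-t/2}\bigr)^2\le4\sinh(t/2)(t+1)^2$; expanding both sides and cancelling the common term $2(t+1)^2e^{-t/2}$, it reduces to the clean one-variable inequality
\[
1+(t+1)^4e^{-t}\le2(t+1)^2e^{t/2},\qquad t\ge0.
\]
I would prove this by setting $f(t):=2(t+1)^2e^{t/2}-(t+1)^4e^{-t}$, noting $f(0)=1$, and showing $f'(t)\ge0$. After dividing out the positive factor $(t+1)$ this reduces to $e^{3t/2}(t+5)\ge(t+1)^2(3-t)$, which is immediate for $t\ge3$ (the right side is then nonpositive) and for $0<t<3$ follows because the exponential factor dominates the quadratic.

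The hard part is this final estimate $1+(t+1)^4e^{-t}\le2(t+1)^2e^{t/2}$. It is an equality at $t=0$ (both sides equal $2$), precisely the point where the original bound \eqref{eq:newtarget} is itself sharp, so any estimate that sacrifices a fixed positive amount near the origin cannot succeed. This forces an argument that is exact at $t=0$ — here the monotonicity of $f$, with $f(0)=1$ — and the verification that $f'\ge0$ on all of $(0,\infty)$, split according to the sign of $3-t$, is the source of the ``tedious'' computation the authors allude to.
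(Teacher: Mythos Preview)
Your argument is correct and takes a genuinely different route from the paper's. Both begin with the same partial-fraction reduction of the right-hand side to $\tfrac{t}{2}+\tfrac{ct}{t+1}$, but then diverge. The paper exponentiates, fixes $t$, and treats the resulting expression $E_t(c)$ as a function of $c$: it computes the explicit minimiser $c_0=\frac{t+1}{t}\log\bigl(\frac{t+1}{t}(1-e^{-t})\bigr)$ and then has to show $E_t(c_0)>0$, which unravels into a one-variable inequality still containing a logarithm of a transcendental expression; this is then dispatched by three successive differentiations. You instead differentiate in $t$, recognise the resulting sign condition as a quadratic $Q(c)$ with positive leading coefficient, and eliminate $c$ in one stroke by bounding the discriminant. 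The payoff is that your residual one-variable inequality $1+(t+1)^4e^{-t}\le 2(t+1)^2e^{t/2}$ is purely polynomial--exponential, with no logarithms, and needs only a single differentiation. The discriminant manoeuvre is the key idea absent from the paper, and it makes the computation materially shorter.

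One small point: the final clause ``for $0<t<3$ follows because the exponential factor dominates the quadratic'' deserves one more line. The cleanest way is to use $e^{3t/2}\ge 1+\tfrac{3t}{2}$, which gives $(t+5)e^{3t/2}\ge \tfrac{3}{2}t^2+\tfrac{17}{2}t+5$, and subtracting $(t+1)^2(3-t)=-t^3+t^2+5t+3$ leaves $t^3+\tfrac12 t^2+\tfrac72 t+2>0$. With that made explicit, the proof is complete and tighter than the paper's.
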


The next result refines the lower bound of Proposition \ref{dhvProp}
for $c\ge 1, t\ge 0\,$ and the upper bound of
Lemma \ref{mfbd} for each $c>0$ and large enough $t\,.$

\begin{lem} \label{lem316}
  For $t\geq 0$, the following inequalities hold
  \begin{equation}\label{eq:lem311:1}
    \frac{t}{t+1}\log c+\frac{t}{2}\leq   F_c(t)\,,
    \text{ }c\geq 1\,,  \tag{1}
  \end{equation}%
  \begin{equation}\label{eq:lem311:2}
     F_c(t)\leq \log \Big(1+\frac{1}{4c}\Big)+\frac{t}{2},
    \text{ }c>0\,.  \tag{2}
  \end{equation}%
  Equality holds in (1) if and only if $t=0$, respectively in (2) if and only
  if $c\geq \frac{1}{2}$and $t=2\log (2c)\,.$ 
\end{lem}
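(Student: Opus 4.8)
The plan is to reduce both inequalities to elementary statements about the quadratic $g(s):=c+s-cs^{2}$ by means of the substitution $s=e^{-t/2}\in(0,1]$. Writing $2c\sinh(t/2)=c\,(e^{t/2}-e^{-t/2})$ and dividing the argument of the logarithm by $e^{t/2}$ gives the key identity $F_c(t)-\tfrac t2=\log(c+s-cs^{2})=\log g(s)$, valid for $t\ge 0$ with $s=e^{-t/2}$. Thus (2) becomes a bound on $g$ from above and (1) a bound on $g$ from below, and the endpoints $s=1$ ($t=0$) and $s\to 0$ ($t\to\infty$) correspond to $g(1)=1$ and $g(0^{+})=c$.

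For (2) I would argue as follows. Since $e^{t/2}>0$, the inequality $F_c(t)\le\log(\,\cdot\,)+\tfrac t2$ is equivalent to $g(s)\le(\text{argument of the logarithm})$. Now $g$ is a downward parabola in $s$ with vertex at $s=1/(2c)$. If $c\ge\tfrac12$ the vertex lies in $(0,1]$ and $\max_{(0,1]}g=g(1/(2c))=c+\tfrac1{4c}$; if $0<c<\tfrac12$ then $g$ is increasing on $(0,1]$ and $\max_{(0,1]}g=g(1)=1$, which still satisfies $1\le c+\tfrac1{4c}$ because $c+\tfrac1{4c}-1=\tfrac{(2c-1)^{2}}{4c}\ge0$. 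Translating back through $s=e^{-t/2}$ yields the upper bound, and equality forces $s=1/(2c)$ with $1/(2c)\le 1$, i.e. $c\ge\tfrac12$ and $t=2\log(2c)$. This is the easy half.

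For (1) I would first exponentiate and divide by $e^{t/2}$ to see that the claim is equivalent to $c^{\,t/(t+1)}\le g(s)$. Applying the weighted arithmetic--geometric mean inequality with weights $\tfrac t{t+1},\tfrac1{t+1}$ to the numbers $c,1$ gives $c^{\,t/(t+1)}\le 1+\tfrac t{t+1}(c-1)$, so it suffices to prove $\Delta(t):=g(s)-1-\tfrac t{t+1}(c-1)=\tfrac{c-1}{t+1}+s-cs^{2}\ge 0$. For $s\le 1/c$ (that is $t\ge 2\log c$) both summands are nonnegative and there is nothing to prove; for $s\in[1/c,1]$ I multiply by $t+1=1-2\log s$ and reduce the claim to $\omega(s)\le\omega(1)=c-1$, where $\omega(s):=(1-2\log s)(cs^{2}-s)$. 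A direct differentiation collapses neatly to $\omega'(s)=1-2(2cs-1)\log s$, and on $[1/c,1]$ one has $2cs-1\ge 1>0$ and $\log s\le 0$, whence $\omega'(s)\ge 1>0$. Thus $\omega$ is strictly increasing, $\omega(s)<c-1$ for $s<1$, and (1) follows, with equality only at $t=0$.

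The main obstacle is (1), and specifically avoiding a direct analysis of $\Phi(t):=F_c(t)-\tfrac t2-\tfrac t{t+1}\log c$. The difficulty is that $\Phi$ vanishes both at $t=0$ and as $t\to\infty$, so no monotonicity of $\Phi$ itself is available, and one is pushed to control the sign changes of $\Phi'(t)=F_c'(t)-\tfrac12-\tfrac{\log c}{(t+1)^{2}}$, which is awkward. The two devices that make the argument clean are the AM--GM linearization of $c^{\,t/(t+1)}$, which replaces the transcendental term by a rational one, and the multiplication by $t+1=1-2\log s$, after which the derivative $\omega'(s)=1-2(2cs-1)\log s$ is manifestly positive on the relevant interval. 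For (2) the only point needing care is the case split at $c=\tfrac12$, which also pins down the stated equality condition.
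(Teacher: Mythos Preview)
Your argument is correct, with one caveat about the statement itself: as printed, (2) is false for $c>1$ (let $t\to\infty$: then $F_c(t)-\tfrac t2\to\log c$, while the claimed bound tends to $\log(1+\tfrac1{4c})<\log c$). What you actually prove via $\max_{s}g(s)=g(1/(2c))=c+\tfrac1{4c}$ is the inequality $F_c(t)\le\log\!\big(c+\tfrac1{4c}\big)+\tfrac t2$, and this is precisely what the paper's own proof establishes and what the stated equality condition and the subsequent lemma require; the ``$1$'' in the statement is a typo for ``$c$''.

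For (2) your approach and the paper's coincide: the paper uses $x+1/x\ge2$ with $x=2ce^{-t/2}=2cs$, which is exactly $(2cs-1)^2\ge0$, i.e.\ the vertex computation for the parabola $g$. For (1) the two routes genuinely diverge. The paper fixes $t>0$ and treats $L_t(c)=1+2c\sinh(t/2)-c^{t/(t+1)}e^{t/2}$ as a function of $c$: its unique critical point is $c_0=\big(\tfrac{t}{t+1}\cdot\tfrac{1}{1-e^{-t}}\big)^{t+1}$, and the inequality $e^t>t+1$ gives $c_0<1$, so $L_t$ is increasing on $[1,\infty)$ with $L_t(1)=1-e^{-t/2}>0$. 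You instead fix $c$ and vary $s$: the weighted AM--GM step $c^{t/(t+1)}\le1+\tfrac{t}{t+1}(c-1)$ linearises the awkward power, after which multiplying by $t+1=1-2\log s$ collapses the derivative to $\omega'(s)=1-2(2cs-1)\log s$, whose positivity on $[1/c,1]$ is immediate. The paper's path is shorter and needs no case split at $s=1/c$; your path is more structural, since the identity $F_c(t)-\tfrac t2=\log g(s)$ with $g(0^+)=c$ and $g(1)=1$ explains at once why (1) is tight at both endpoints, and the equality case falls out from the AM--GM equality condition together with the strict monotonicity of $\omega$.
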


\begin{proof}
(\ref{eq:lem311:1}) For each fixed $t>0$, we consider the expression
\begin{equation*}
  L_{t}(c)=1+c(e^{\frac{t}{2}}-e^{-\frac{t}{2}})-c^{\frac{t}{t+1}}e^{\frac{t}{2}}
\end{equation*}%
as a function of $c\,.$ 

The derivative is
\begin{equation*}
  L_{t}^{\prime }(c)=e^{\frac{t}{2}}-e^{-\frac{t}{2}}-\frac{t}{t+1}c^{-\frac{1%
  }{t+1}}e^{\frac{t}{2}},
\end{equation*}%
and the equation $L_{t}^{\prime }(c)=0$ has the unique solution
\begin{equation}
  c=c_{0}:=\Big(\frac{t}{t+1}\frac{1}{1-e^{-t}}\Big)^{t+1}.  \label{eq:c}
\end{equation}%
The inequality $e^{t}>t+1$ for $t>0\,,$ implies that $c_{0}<1$,
and hence $ L_{t}^{\prime }(c)>0$ holds for $c\geq 1$.
Since $L_{t}(1)=1-e^{-\frac{t}{2}}>0$, we have $L_{t}(c)>0$ for $c\geq 1$.

Hence the following inequality holds for $c\geq 1$ and $t>0$,
\begin{equation*}
  c^{\frac{t}{t+1}}e^{\frac{t}{2}}<1+c(e^{\frac{t}{2}}-e^{-\frac{t}{2}}).
\end{equation*}
Considering the logarithms of both sides, we have the assertion.

(\ref{eq:lem311:2}) By the arithmetic-geometric mean inequality,
$2\leq x+\frac{1}{x}$ holds for all $x>0$, hence
\begin{equation*}
  2-2ce^{-\frac{t}{2}}\leq \frac{1}{2c}e^{\frac{t}{2}}\,.
\end{equation*}
Adding $2ce^{\frac{t}{2}}$ to the both sides of this inequality, dividing by
$2$ and taking the logarithm, we obtain (\ref{eq:lem311:2}). 

Equality holds in (\ref{eq:lem311:2}) if and only if
$2ce^{-\frac{t}{2}}=1$, i.e. $t=2\log \left( 2c\right) $. 
\end{proof}


\begin{figure}[!htb]
    \centering
    \begin{minipage}{.5\textwidth}
        \centering
        \includegraphics[width=0.9\linewidth, height=0.2\textheight]{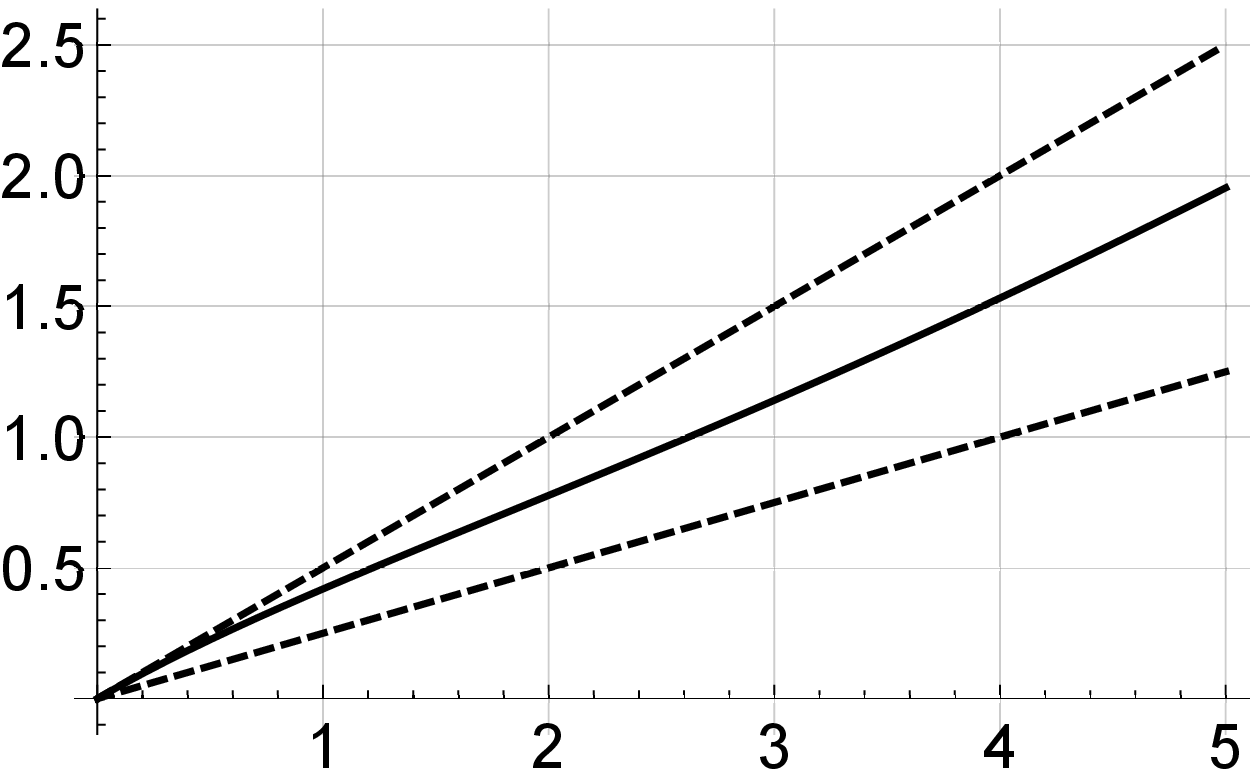}
    \end{minipage}%
    \begin{minipage}{0.5\textwidth}
        \centering
        \includegraphics[width=0.9\linewidth, height=0.2\textheight]{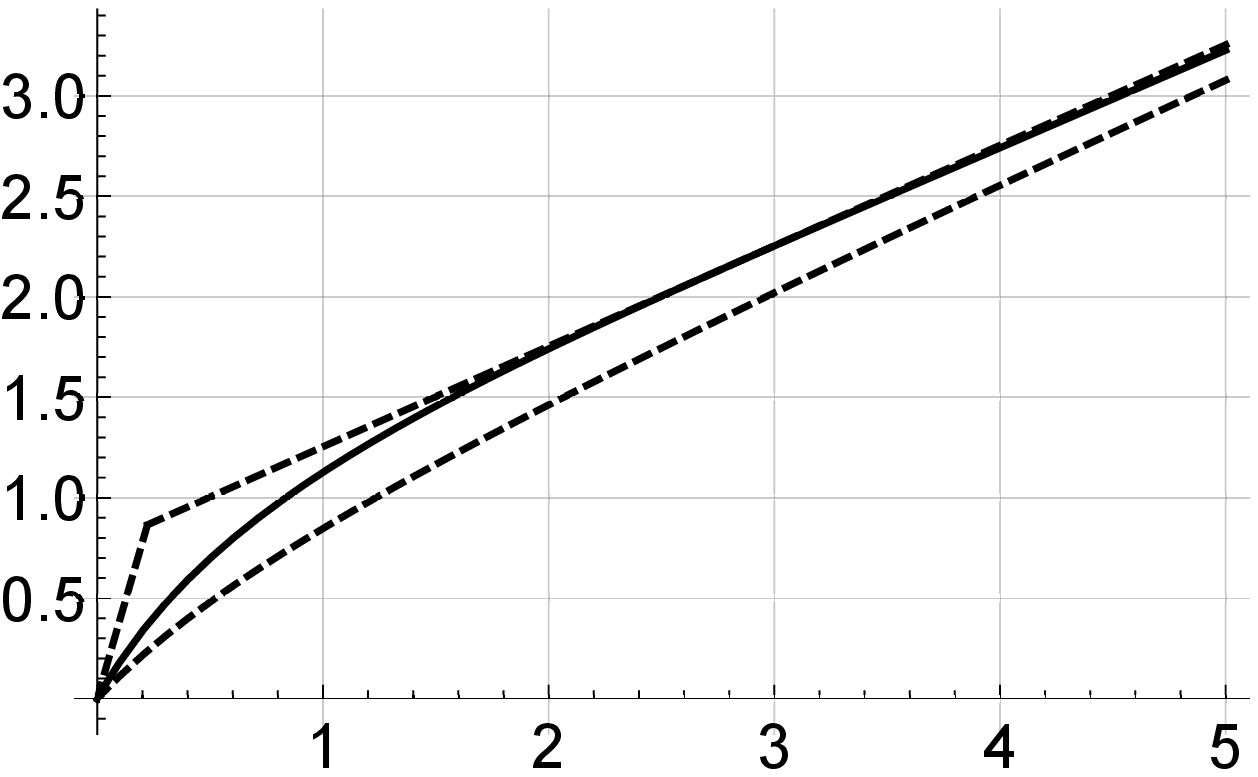}
    \end{minipage}
    \caption{The graphs of the functions $ y=l(c,t), y=F_c(t)$, and
           $ y=u(c,t)$ in Lemma \ref{Lemma313}.} Left: $c=0.5$, right: $c=2\,.$
  \label{fig:2}

\end{figure}

\begin{lem} \label{Lemma313}
  Let \renewcommand{\arraystretch}{2}
  \begin{equation}
    l(c,t)=\left\{
    \begin{array}{ll}
     \dfrac{t}{t+1}\log c+\dfrac{t}{2}\quad  & \mbox{if \ } c\geq 1,\ t\geq 0 \\
     \dfrac{ct}{2} & \mbox{if \ } 0<c\leq 1,\ t\geq 0%
    \end{array}%
    \right.   \label{eq:lower}
  \end{equation}%
  \renewcommand{\arraystretch}{1} and
  \begin{equation}
    u(c,t)=\min \Big\{\log \Big(c+\frac{1}{4c}\Big)+\frac{t}{2},\ \ \log
    (1+ct)+c(e^{t}-1)\Big\}.  \label{eq:upper}
  \end{equation}

  Then, for $t>0$ and $c>0$, the following inequalities hold
  \begin{equation}
    l(c,t)<  F_c(t)  \leq u(c,t).  \label{eq:bd}
  \end{equation}

  The upper bound is attained, for $t>0$ and $c>0,$ if and only if
  $c>\frac{1}{2}$ and $t=2\log (2c).$
\end{lem}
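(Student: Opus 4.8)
The plan is to treat the lower and upper bounds separately, reducing each to one-variable estimates already available, and then to read off the equality analysis. For the lower bound $l(c,t) < F_c(t)$ I would split into the two ranges defining $l$. When $c \ge 1$ the inequality $\tfrac{t}{t+1}\log c + \tfrac t2 \le F_c(t)$ is exactly Lemma~\ref{lem316}(1), and since equality there holds only at $t=0$, the bound is strict for $t>0$. When $0<c\le 1$ I would instead prove $\tfrac{ct}{2} < F_c(t)$, i.e. $e^{ct/2} < 1 + 2c\sinh(t/2)$, by examining $\phi(t) := 1 + 2c\sinh(t/2) - e^{ct/2}$. One has $\phi(0)=0$ and $\phi'(t) = \tfrac c2\bigl(e^{t/2}+e^{-t/2}-e^{ct/2}\bigr)$; since $c\le1$ gives $e^{ct/2}\le e^{t/2} < e^{t/2}+e^{-t/2}$, we get $\phi'(t)>0$ for $t>0$, whence $\phi(t)>0$. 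The two pieces agree at $c=1$ (both give $t/2$), so $l$ is well defined and the inequality is strict throughout.

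For the upper bound I would show $F_c \le u_1$ and $F_c < u_2$, where $u_1(c,t)=\log\bigl(c+\tfrac1{4c}\bigr)+\tfrac t2$ and $u_2(c,t)=\log(1+ct)+c(e^t-1)$, so that $F_c \le \min\{u_1,u_2\}=u$. The first estimate is precisely what the arithmetic–geometric mean argument in the proof of Lemma~\ref{lem316}(2) yields: applying $2 \le x + 1/x$ with $x = 2ce^{-t/2}$ and simplifying gives $1+2c\sinh(t/2)\le e^{t/2}\bigl(c+\tfrac1{4c}\bigr)$, and the equality case of AM--GM forces $2ce^{-t/2}=1$, i.e. $c\ge\tfrac12$ and $t=2\log(2c)$, which for $t>0$ means $c>\tfrac12$. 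For the second estimate I would use $\log(1+s)\le s$ to get $F_c(t)\le 2c\sinh(t/2)$ and then the factorization $2\sinh(t/2)=e^{-t/2}(e^t-1)<e^t-1$, so that $F_c(t) < c(e^t-1) \le \log(1+ct)+c(e^t-1)=u_2$; this inequality is strict for all $t>0$, $c>0$.

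Finally I would combine these to obtain the equality characterization. Since $F_c<u_2$ holds strictly, equality in $F_c\le u=\min\{u_1,u_2\}$ can only occur through the $u_1$-branch: it requires $F_c=u_1$, which by the AM--GM equality case happens precisely when $c>\tfrac12$ and $t=2\log(2c)$; and at such a point $F_c=u_1$ together with $F_c<u_2$ forces $u_1<u_2$, hence $u=u_1=F_c$, so the bound is attained there and nowhere else. The only mild subtlety I anticipate is this last bookkeeping, namely making the strictness $F_c<u_2$ do the work of pinning attainment to the $u_1$-branch; everything else is a direct appeal to Lemma~\ref{lem316} together with elementary monotonicity arguments.
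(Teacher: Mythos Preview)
Your argument is correct, and in two places it is cleaner than the paper's. For the lower bound with $0<c\le 1$ the paper fixes $t$ and studies $f_t(c)=1+c(e^{t/2}-e^{-t/2})-e^{ct/2}$ as a function of $c$, which requires locating a critical point and splitting into cases according to the sign of $f_t'(1)$; your choice to fix $c$ and differentiate in $t$ collapses this to the single observation $e^{ct/2}\le e^{t/2}<e^{t/2}+e^{-t/2}$. Similarly, for the strict bound $F_c<u_2$ the paper introduces $g_t(c)=(1+ct)e^{c(e^t-1)}-1-2c\sinh(t/2)$ and checks monotonicity of $g_t$ and $g_t'$, whereas your chain $\log(1+s)\le s$, $2\sinh(t/2)=e^{-t/2}(e^t-1)<e^t-1$ gives $F_c(t)<c(e^t-1)\le u_2(c,t)$ in one line. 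The $u_1$ bound and its equality case via AM--GM with $x=2ce^{-t/2}$ are exactly what the paper does (this is indeed the computation behind Lemma~\ref{lem316}(2), yielding $c+\tfrac1{4c}$), and your bookkeeping for the attainment---using the strict inequality $F_c<u_2$ to force the minimum onto the $u_1$-branch---matches the paper's concluding sentence. Nothing is missing.
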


\begin{proof}
We first prove the lower bounds. By Lemma \ref{lem316} it is enough to prove
the case $0<c\leq 1.$ For each fixed $t>0$, let
\begin{equation*}
  f_{t}(c)=1+c(e^{\frac{t}{2}}-e^{-\frac{t}{2}})-e^{\frac{ct}{2}}.
\end{equation*}%
Then $f_{t}(0)=0$ and
\begin{equation*}
  f_{t}^{\prime }(c)=e^{\frac{t}{2}}-e^{-\frac{t}{2}}-\frac{t}{2}e^{\frac{ct}{2}}
\end{equation*}%
is decreasing on the real axis.
Because $f_{t}^{\prime }(0)=e^{\frac{t}{2}}-e^{-\frac{t}{2}}-\frac{t}{2}>0$,
the sign of $f_{t}^{\prime }(1)=e^{\frac{t}{2}}\left( 1-\frac{t}{2}-e^{-t}\right) $
depends on $t$. The equation $1-\frac{t}{2}-e^{-t}=0$
has a unique positive solution $t_{0}\in \left(1,2\right) $
and we have  $f_{t}^{\prime }(1)>0$ for $0<t<t_{0}$ and $f_{t}^{\prime }(1)<0$
for $t>t_{0}$.

If $0<t\leq t_{0}$, it follows that $f_{t}$ is increasing on $\left[ 0,1%
\right] $, therefore $f_{t}(c)>0$ for $0<c\leq 1$.

If $t>t_{0}$,  there is a unique positive critical point $c=c_{0}$ as a
solution of
\begin{equation*}
  f_{t}^{\prime }(c)=0\,.
\end{equation*}
The function $f_{t}$ attains the maximum at $c=c_{0}$. Since $f_{t}(0)=0$
and $f_{t}(1)=1-e^{-\frac{t}{2}}>0$, we have $f_{t}(c)>0$ for $0<c\leq 1$.
Therefore
\begin{equation*}
  e^{\frac{ct}{2}}<1+2c\sinh \frac{t}{2}
\end{equation*}
holds for $0<c\leq 1$ which yields the desired inequality.

\bigskip

We now prove the upper bound. By Lemma \ref{lem316} it is enough to prove
that 
\begin{equation*}
  \log \Big(1+2c\sinh \frac{t}{2}\Big)<\log (1+ct)+c(e^{t}-1)
\end{equation*}
holds for $c>0$ and $t>0.$ For each fixed $t>0\,,$ let
\begin{equation*}
  g_{t}(c)=(1+ct)e^{c(e^{t}-1)}-1-c(e^{\frac{t}{2}}-e^{-\frac{t}{2}})\,.
\end{equation*}
Then, $g_{t}(c)$ is increasing on $(0,\infty )$, because
\begin{equation*}
  g_{t}^{\prime }(c)=e^{c(e^{t}-1)}\big((1+ct)(e^{t}-1)+t\big)-(e^{\frac{t}{2}%
  }-e^{-\frac{t}{2}})
\end{equation*}
is clearly increasing and
\begin{equation*}
  g_{t}^{\prime }(0)=e^{t}-1+t-(e^{\frac{t}{2}}-e^{-\frac{t}{2}})=(e^{\frac{t}{%
  2}}-e^{-\frac{t}{2}})(e^{\frac{t}{2}}-1)+t>0\,.
\end{equation*}
Therefore, $g_{t}(c)>0$ holds as $g_{t}(0)=0$. Hence, we have
\begin{equation*}
  1+c(e^{\frac{t}{2}}-e^{-\frac{t}{2}})<(1+ct)e^{c(e^{t}-1)}\,,
\end{equation*}
which yields the assertion.

Moreover, for $t>0$, $c>0$, the equation $\log \Big(1+2c\sinh \dfrac{t}{2}\Big)=u(c,t)$
leads  to
$$
  \log \Big(1+2c\sinh \dfrac{t}{2}\Big)
  =\log \Big(c+\frac{1}{4c}\Big)+\frac{t}{2}<\log (1+ct)+c(e^{t}-1),
$$
  which holds if and only if $c>\frac{1}{2}$ and $t=2\log (2c)$.
\end{proof}

\medskip

\begin{nonsec}{\bf Proof of Theorem \ref{thm110}.} {\rm
The upper bound follows from Lemmas \ref{dhv44}, \ref{lem316}
and the lower bound from Lemma \ref{lem316}.} \qed 
\end{nonsec}

\medskip

The above results readily give the following theorem.

\begin{thm}
  For points $x,y \in G,$ and a number $ c \ge 1\,,$ we have
  $$
    L \, j_G(x,y) \le W(x,y) \le U\, j_G(x,y)
  $$
  where $W$ is the metric
  $$
    W(x,y)= \log \Big( 1+ 2c \sinh \frac{j_G(x,y)}{2}\Big)
  $$
  and
  $$
    L= \frac{1}{2} +\frac{\log(c)}{1+ j_G(x,y)}\,, \quad U
     = \frac{j_G(x,y)+(2c+1)}{2(1+ j_G(x,y))} \,.
  $$
\end{thm}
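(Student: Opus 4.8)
The plan is to reduce the two-sided estimate to the one-variable inequalities already established for the function $F_c$ of Proposition \ref{dhvProp}. Writing $t = j_G(x,y) \ge 0$, the definition of $W$ gives $W(x,y) = \log\big(1 + 2c\sinh\frac{t}{2}\big) = F_c(t)$, so the claim $L\,j_G(x,y) \le W(x,y) \le U\,j_G(x,y)$ is exactly $L\,t \le F_c(t) \le U\,t$ for $t \ge 0$ and $c \ge 1$. Thus nothing genuinely new has to be proved: the statement is a repackaging of Lemmas \ref{lem316} and \ref{mfbd}, and the only real task is to verify that the coefficients $L$ and $U$, once multiplied by $t$, reproduce the bounds appearing in those lemmas.

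For the lower bound I would simply expand
$$
  L\,t = \Big(\tfrac12 + \frac{\log c}{1+t}\Big)t = \frac{t}{2} + \frac{t}{t+1}\log c,
$$
which is precisely the left-hand side of inequality (1) in Lemma \ref{lem316}. Since the hypothesis $c \ge 1$ matches the range required there, that lemma yields $L\,t \le F_c(t)$ immediately.

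For the upper bound I would likewise compute
$$
  U\,t = \frac{t + (2c+1)}{2(1+t)}\,t = \frac{t^2 + (2c+1)t}{2(t+1)},
$$
which coincides with the right-hand side of the inequality \eqref{eq:newtarget} in Lemma \ref{mfbd}. That lemma holds for every $c > 0$, in particular for $c \ge 1$, so $F_c(t) \le U\,t$ follows at once. The degenerate case $t = 0$ (that is, $x = y$) is trivial, all three quantities vanishing.

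Since both halves are direct algebraic rewrites of previously proved lemmas, I do not expect any substantive obstacle; the only point requiring care is the bookkeeping that matches the displayed forms of $L$ and $U$ to the two lemmas, together with checking that the ranges of $c$ are compatible, the lower bound genuinely needing $c \ge 1$ while the upper bound is available for all $c > 0$.
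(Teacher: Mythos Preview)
Your proposal is correct and mirrors exactly what the paper does: the theorem is stated without proof, preceded only by the remark that ``the above results readily give the following theorem,'' and your reduction to Lemma~\ref{lem316}(1) for the lower bound and Lemma~\ref{mfbd} for the upper bound is precisely that unpacking. The algebraic identifications $L\,t=\frac{t}{t+1}\log c+\frac{t}{2}$ and $U\,t=\frac{t^{2}+(2c+1)t}{2(t+1)}$ are correct, as is your observation about the respective ranges of $c$.
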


\begin{prop}
  Let $\left( G,\rho _{G}\right) $ and $(D,\rho _{D})$ be two metric spaces
  and let  $\omega _{G,c}=\log \left( 1+2c\sinh \frac{\rho _{G}}{2}\right) $
  and $\omega _{D,c}=\log \left( 1+2c\sinh \frac{\rho_{D}}{2}\right) $,
  where $c\geq 1$. If $f\colon (G,\rho _{G})\rightarrow (D,\rho _{D})$ is an 
  $L$-Lipschitz function, then 
  $f\colon (G,\omega _{G,c})\rightarrow (D,\omega _{D,c})$
  is $L^{\prime }$-Lipschitz, where $L^{\prime }=L$ if  $L\geq 1$ and 
  $L^{\prime }=cL$ if $L>0$. Conversely, 
  if  $f\colon (G,\omega _{G,c})\rightarrow (D,\omega _{D,c})$ is an 
  $L^{\prime }$-Lipschitz function with $L^{\prime }>0 $, 
  then $f\colon (G,\rho _{G})\rightarrow (D,\rho _{D})$ is an 
  $2cL^{\prime }$-Lipschitz function. 
\end{prop}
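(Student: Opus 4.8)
The plan is to observe that both new metrics are obtained from the old ones by postcomposing with a single scalar function. Writing $F_c(t)=\log(1+2c\sinh(t/2))$ as in Proposition \ref{dhvProp}, we have $\omega_{G,c}(x,y)=F_c(\rho_G(x,y))$ and $\omega_{D,c}(x,y)=F_c(\rho_D(x,y))$, and $F_c$ is strictly increasing on $[0,\infty)$ with $F_c(0)=0$. Everything will be reduced to two consequences of Lemma \ref{propmm}, both available because $c\ge 1$: first, the two-sided linear envelope $\tfrac{t}{2}<F_c(t)<ct$ for $t>0$ (the endpoints $1/2$ and $c$ of the range of $F_c(t)/t$); and second, the star-shaped inequality $F_c(\lambda t)\le \lambda F_c(t)$ for every $\lambda\ge 1$ and $t\ge 0$, which follows at once from $F_c(t)/t$ being decreasing, since then $F_c(\lambda t)/(\lambda t)\le F_c(t)/t$.

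For the forward implication I would start from the hypothesis $\rho_D(f(x),f(y))\le L\,\rho_G(x,y)$ and apply the monotone function $F_c$ to both sides, obtaining $\omega_{D,c}(f(x),f(y))=F_c(\rho_D(f(x),f(y)))\le F_c(L\,\rho_G(x,y))$. When $L\ge 1$ the star-shaped inequality with $\lambda=L$ gives $F_c(L\,\rho_G(x,y))\le L\,F_c(\rho_G(x,y))=L\,\omega_{G,c}(x,y)$, so $f$ is $L$-Lipschitz for $\omega$. When instead $0<L<1$ the star-shaped inequality points the wrong way, and I would fall back on the linear envelope: the upper bound gives $F_c(L\rho_G(x,y))<cL\,\rho_G(x,y)$, while the lower bound gives $\rho_G(x,y)<2F_c(\rho_G(x,y))=2\,\omega_{G,c}(x,y)$, and combining these yields $\omega_{D,c}(f(x),f(y))<2cL\,\omega_{G,c}(x,y)$. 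The two-sided envelope thus produces the constant $2cL$ for this range; extracting the sharper constant $cL$ asserted here is the delicate point, and I expect it would require a direct comparison of $F_c(Ls)$ with $F_c(s)$ rather than the crude linear bounds.

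For the converse I would run the same envelope in reverse. From $\omega_{D,c}(f(x),f(y))\le L'\,\omega_{G,c}(x,y)$, that is $F_c(\rho_D(f(x),f(y)))\le L'\,F_c(\rho_G(x,y))$, the lower bound applied to the left side and the upper bound to the right side give $\tfrac12\rho_D(f(x),f(y))<F_c(\rho_D(f(x),f(y)))\le L'\,F_c(\rho_G(x,y))<cL'\,\rho_G(x,y)$, whence $\rho_D(f(x),f(y))<2cL'\,\rho_G(x,y)$ and $f$ is $2cL'$-Lipschitz for $\rho$. The degenerate case $x=y$ is trivial in both directions, since $\rho=0$ forces $\omega=0$. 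The main obstacle throughout is the forward case $0<L<1$: monotonicity of $F_c$ alone yields only the reverse inequality $F_c(L\rho_G)\ge L\,F_c(\rho_G)$, so one is forced through a comparison of $F_c(Ls)$ and $F_c(s)$, and this is exactly where the multiplicative constant of the statement must be pinned down.
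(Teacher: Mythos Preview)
Your approach is exactly the paper's: write $\omega=F_c\circ\rho$, use monotonicity of $F_c$, and invoke the two consequences of Lemma~\ref{propmm} (the subadditivity-type inequality $F_c(\lambda t)\le\lambda F_c(t)$ for $\lambda\ge1$, and the envelope $t/2<F_c(t)<ct$). The case $L\ge1$ and the converse are handled identically in the two arguments, and there is nothing to add there.

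Your hesitation about the constant $cL$ when $0<L<1$ is well placed; do not look for a missing trick. The paper's own proof for that range stops at the display
\[
F_c\big(\rho_D(f(x),f(y))\big)\le F_c\big(L\rho_G(x,y)\big)\le cL\,\rho_G(x,y),
\]
which is a bound by $cL\,\rho_G(x,y)$, not by $cL\,\omega_{G,c}(x,y)$; passing from $\rho_G$ to $\omega_{G,c}$ via $t<2F_c(t)$ costs precisely the extra factor~$2$ you obtained, so the argument actually delivers $2cL$. In fact the constant $cL$ cannot hold in general: for $c=1$ and any $0<L<1$, the decrease of $F_1(t)/t$ gives $F_1(Lt)/(Lt)\ge F_1(t)/t$, i.e.\ $F_1(Lt)\ge LF_1(t)$ with strict inequality for $t>0$; hence the map $f(x)=Lx$ on $(\mathbb{R},|\cdot|)$ is $L$-Lipschitz for $\rho$ but is \emph{not} $cL=L$-Lipschitz for $\omega_{\cdot,1}$. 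So your $2cL$ is the correct conclusion of the method, and the sharper $cL$ stated in the proposition is not attainable.
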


\begin{proof}
Denote $F_{c}(t)=\log \big(1+2c\sinh \left( \frac{t}{2}\right) \big)$
as in Lemma \ref{propmm}. 
By Theorem \ref{dhvfun}, $\omega _{G,c}=F_{c}\circ \rho _{G}$
and $\omega _{D,c}=F_{c}\circ \rho _{D}$ are metrics on $G$ and $D$,
respectively.

Fix distinct points $x,y\in G\,.$

Assume that $f\colon (G,\rho _{G})\rightarrow (D,\rho _{D})$\textit{\ }is 
$L$-Lipschitz. We have to prove that 
\begin{equation}
  F_{c}\big(\rho _{D}(f(x),f(y))\big)
     \leq L^{\prime }F_{c}\big( \rho _{G}(x,y)\big),  
  \label{Lipfin}
\end{equation}%
where $L^{\prime }=L$ if $L\geq 1$ and $L^{\prime }=cL$ whenever $L>0$. 

Since $f\colon (G,\rho _{G})\rightarrow (D,\rho _{D})$\textit{\ }is $L$-Lipschitz
and $F_{c}$ is increasing, 
\begin{equation}
  F_{c}\big(\rho _{D}(f(x),f(y))\big)
   \leq F_{c}\big( L\rho _{G}(x,y)\big) \,.
\label{Lipinit}
\end{equation}

Assume first that $L\geq 1$. By Lemma \ref{propmm}, $\frac{F_{c}(t)}{t}$ is
decreasing on $\left( 0,\infty \right) $, therefore $F_{c}(Lt)\leq LF_{c}(t)$
for all $t>0$, as $L\geq 1$. 
Then $F_{c}\big( L\rho _{G}(x,y)\big) \leq LF_{c}\big( \rho _{G}(x,y)\big) $.
The latter inequality and (\ref{Lipinit}) imply 
(\ref{Lipfin}) with $L^{\prime }=L$. 

By Lemma \ref{propmm}, $\frac{t}{2}\leq F_{c}(t)\leq ct$ for all $t\geq 0$,
if $c\geq 1$. 

For all $L>0$, (\ref{Lipinit}) implies     
\begin{equation*}
  F_{c}\big(\rho _{D}(f(x),f(y))\big)\leq F_{c}\big( L\rho _{G}(x,y)\big) \leq
  cL\rho _{G}\left( x,y\right) .
\end{equation*}

Now assume that (\ref{Lipfin}) holds. Then 
\begin{align*}
  \rho _{D}(f(x),f(y)) 
    &\leq 2F_{c}\big(\rho _{D}(f(x),f(y))\big)
            \leq L^{\prime}F_{c}\big( \rho _{G}(x,y)\big)  \\
    &\leq  2cL^{\prime }\rho _{G}(x,y). 
\end{align*}
\end{proof}


\section{Metrics and quasiregular maps}


If $D\in \{\mathbb{B}^{n},\mathbb{H}^{n}\}$ and $\rho _{D}$ is the
hyperbolic metric on $D$, then the metric defined on $D$ by 
$W_{c}(x,y)=\log\left( 1+2c\sinh \frac{\rho _{D}\left( x,y\right) }{2}\right) $,
where $c\geq 1$, is invariant under M\"{o}bius self maps of $D$, 
due to the M\"{o}bius invariance of the hyperbolic metric. 

\medskip

We also recall some notation about special functions and
the fundamental distortion result of
quasiregular maps, a variant of the Schwarz lemma for these maps, see
\cite{hkv}.
For $r\in(0,1)$ and $K>0$, we define the distortion function
$$\varphi_K(r)=\mu^{-1}(\mu(r)/K),$$
where $\mu(r)$ is the modulus of the planar Gr\"otzsch ring, a decreasing
homeomorphism $\mu\colon (0,1)\to (0,\infty)\,,$
see \cite[pp. 92-94]{avv}, \cite[pp.120-125]{hkv}. 

\begin{thm}\label{NewThm}
  Let $G_1$ and $G_2$ be simply-connected domains in 
  $\mathbb{R}^2$ and let $f\colon G_1\to G_2=f(G_1)$ be a 
  $K$-quasiregular mapping.
  Then  for all $x,y\in G_1$ 
  \begin{align*}
    \rho_{G_2}(f(x),f(y))\leq c(K)\max\{\rho_{G_1}(x,y),
    \rho_{G_1}(x,y)^{1\slash K}\} 
  \end{align*}
 where $c(K)$ is as in
  \emph{\cite[Thm 16.39, p. 313]{hkv}, \cite[Theorem 3.6]{wv}}.
\end{thm}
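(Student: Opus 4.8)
The plan is to reduce the statement to the already-known sharp quasiregular Schwarz lemma on the unit disk and then transfer it to $G_1,G_2$ by means of the Riemann mapping theorem, exploiting the conformal invariance of the hyperbolic metric. First I would invoke the Riemann mapping theorem to fix conformal homeomorphisms $\phi_1\colon G_1\to\mathbb{B}^2$ and $\phi_2\colon G_2\to\mathbb{B}^2$; here $G_1,G_2$ are proper simply-connected subdomains, so that their hyperbolic metrics are defined exactly as in the Preliminaries, following \cite[Thm 6.3, p. 26]{bm}. Since conformal maps are hyperbolic isometries — which is precisely what is built into the definition of the hyperbolic metric of a simply-connected domain, the pullback being independent of the chosen Riemann map by the M\"obius invariance of $\rho_{\mathbb{B}^2}$ — one has, for all $x,y\in G_1$ and $u,v\in G_2$,
$$\rho_{G_1}(x,y)=\rho_{\mathbb{B}^2}(\phi_1(x),\phi_1(y)),\qquad \rho_{G_2}(u,v)=\rho_{\mathbb{B}^2}(\phi_2(u),\phi_2(v)).$$

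Next I would form the composite $g:=\phi_2\circ f\circ\phi_1^{-1}\colon\mathbb{B}^2\to\mathbb{B}^2$. Because $\phi_1^{-1}$ and $\phi_2$ are conformal, hence $1$-quasiregular, and the maximal dilatation is submultiplicative under composition, $g$ is a non-constant $K$-quasiregular self-mapping of the disk. Note that $f$ need not be injective; this is exactly why the quasiregular — rather than quasiconformal — form of the Schwarz lemma is needed, and the argument is unaffected, since $\phi_2$ is merely the Riemann map of the prescribed target $G_2=f(G_1)$ and bears no relation to the injectivity of $f$. Applying the sharp quasiregular Schwarz lemma on the unit disk \cite[Theorem 3.6]{wv} to $g$ yields, with the same constant $c(K)$,
$$\rho_{\mathbb{B}^2}(g(a),g(b))\leq c(K)\max\{\rho_{\mathbb{B}^2}(a,b),\ \rho_{\mathbb{B}^2}(a,b)^{1/K}\}$$
for all $a,b\in\mathbb{B}^2$.

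Finally I would specialize $a=\phi_1(x)$ and $b=\phi_1(y)$. Then $g(a)=\phi_2(f(x))$ and $g(b)=\phi_2(f(y))$, so by the two isometry identities the left-hand side above equals $\rho_{G_2}(f(x),f(y))$, while each occurrence of $\rho_{\mathbb{B}^2}(a,b)$ equals $\rho_{G_1}(x,y)$; substituting produces exactly the asserted inequality. The proof is therefore essentially bookkeeping, and the only points requiring care are the conformal invariance of the hyperbolic metric on simply-connected domains and the preservation of $K$-quasiregularity under pre- and post-composition with conformal maps. I do not expect a genuine obstacle: the analytic content — the extremal value of the constant $c(K)$ and the $\max\{t,t^{1/K}\}$ growth — is imported wholesale from the disk estimate \cite[Theorem 3.6]{wv}, \cite[Thm 16.39, p. 313]{hkv}, so no new inequality has to be established here.
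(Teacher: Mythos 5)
Your proposal is correct, but there is nothing in the paper to compare it against: the paper gives no proof of Theorem \ref{NewThm}, which is quoted as a known result from \cite[Thm 16.39, p. 313]{hkv} and \cite[Theorem 3.6]{wv}, with the constant $c(K)$ imported wholesale. What you have written is the standard transfer argument by which the simply-connected version is derived from the unit-disk version of the quasiregular Schwarz lemma, and it is essentially the derivation behind the cited statement: $g=\phi_2\circ f\circ\phi_1^{-1}$ is $K$-quasiregular because pre- and post-composition with conformal maps does not change the maximal dilatation, and the identities $\rho_{G_1}(x,y)=\rho_{\mathbb{B}^2}(\phi_1(x),\phi_1(y))$ and $\rho_{G_2}(u,v)=\rho_{\mathbb{B}^2}(\phi_2(u),\phi_2(v))$ are exactly the definition of the hyperbolic metric of a simply connected plane domain (well defined, as you note, by the M\"obius invariance of $\rho_{\mathbb{B}^2}$); the final step is substitution of equal quantities, so not even monotonicity of $t\mapsto\max\{t,t^{1/K}\}$ is needed. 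Two hypotheses that are tacit in the statement should be made explicit in your write-up: $G_1$ and $G_2$ must be proper subdomains of $\mathbb{R}^2$ (the Riemann mapping theorem and the hyperbolic metric both fail for the whole plane), which you flag only in passing, and $f$ must be nonconstant (otherwise $G_2=f(G_1)$ is not a domain), since \cite[Theorem 3.6]{wv} is stated for nonconstant $K$-quasiregular self-maps of $\mathbb{B}^2$; here nonconstancy is automatic from the hypothesis that $f(G_1)$ is a domain. With those points noted, your reduction is complete and correct.
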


\begin{rem}
  {\rm By  \cite[Thm 16.39, p. 313]{hkv},
 \begin{align*}
  K\leq c(K)\leq\log(2(1+\sqrt{1-1\slash e^2}))(K-1)+K
 \end{align*}
 and, in particular, $c(K)\to1$, when $K\to1$.}
\end{rem}

\begin{nonsec}{\bf Proof of Theorem \ref{Wqc}.}  {\rm
By Theorem \ref{NewThm} \cite[Thm 16.39]{hkv},
\begin{equation}
  \rho \left( f\left( x\right) ,f\left( y\right) \right) \leq c(K)\max \left\{
  \rho \left( x,y\right) ^{1/K},\rho \left( x,y\right) \right\} .
  \label{dishyp}
\end{equation}
According to Lemma \ref{propmm},

\begin{equation}
  \frac{t}{2}
  <\log \left( 1+2\lambda \sinh \frac{t}{2}\right)
  <\lambda t \text{\,\, for every } \,\, t\in \left( 0,\infty \right) \text{.}
  \label{Lemma3.4}
\end{equation}
Then for all $x,y\in \mathbb{B}^{2}$
\begin{align*}
  W_{\lambda }\left( f\left( x\right) ,f\left( y\right) \right)
    &\leq \lambda \rho (f(x),f(y))
     \leq \lambda c(K)\max \left\{ \rho \left(x,y\right) ^{1/K},
      \rho \left( x,y\right) \right\} \\
   &\leq \lambda c(K)\max \left\{ 2^{1/K}W_{\lambda }\left( x,y\right)^{1/K},
      2W_{\lambda }\left( x,y\right) \right\}
\end{align*}
and (\ref{distow}) follows.
\qed
}
\end{nonsec}

\section{Appendix}

We give here the proof of Lemma \ref{mfbd}.
We shall apply the inequality

\begin{equation}\label{eq:et}
   1-e^{-t}=\frac{e^t-1}{e^t}> \frac{t}{t+1}\,
\end{equation}
which easily  follows from $ e^{t}-(t+1)>0 \,, t> 0\,.$

\bigskip

\begin{nonsec}{\bf Proof of Lemma \ref{mfbd}.} {\rm
The right hand side of \eqref{eq:newtarget} can be written as
$ \dfrac12 t+\dfrac{ct}{t+1} $.
Taking the exponential function of both sides of \eqref{eq:newtarget},
we need to check that for each fixed  $ t >0 $ the following inequality
holds,
\begin{equation}\label{eq:expform}
  E_t(c):=e^{\frac{t}2}\cdot e^{\frac{ct}{t+1}}
               -1-c(e^{\frac{t}2}-e^{-\frac{t}2})>0, \quad (c>0).
\end{equation}

\medskip

First, we remark that $ E_t(0) =e^{\frac{t}2}-1> 0 $ holds for $ t>0 $.

Next, we will show that $ E'_t(c) >0 $ holds for $ c>0 $.
It is clear that the derivative
\begin{equation}\label{eq:diffE}
  E'_t(c)=e^{\frac{t}{2}}\Big(\frac{t}{t+1}e^{\frac{t}{t+1}c}-1+e^{-t}\Big)
\end{equation}
is an increasing function with respect to $ c $, and satisfies
$ \lim_{c\to \infty}E'_t(c)=\infty $.
From \eqref{eq:et}, we have
$$
   E'_t(0)=e^{\frac{t}{2}}\Big(\frac{t}{t+1}-(1-e^{-t})\Big)<0.
$$
Hence, $ E'_t(c)=0 $ has the unique positive root $ c_0 $,
and $ E_t(c) $ attains the minimum at $ c=c_0 $.
Moreover, the root $ c_0 $ is given by the formula
$$
    c_0=\frac{t+1}{t}\log\Big(\frac{t+1}t(1-e^{-t})\Big).
$$
Then,
$$
  E_t(c_0)=\frac{t+1}t\Big(1-\log\big(\frac{t+1}t(1-e^{-t})\big)\Big)
         (e^{\frac{t}2}-e^{-\frac{t}2})-1.
$$
Therefore, we need to show that
\begin{equation}\label{eq:min}
  C(t):=\Big(1-\log\big(\frac{t+1}t(1-e^{-t})\big)\Big)
         (e^{\frac{t}2}-e^{-\frac{t}2})-\frac{t}{t+1}>0 \quad (t>0).
\end{equation}
By using \eqref{eq:et}, we have
\begin{align*}
  C(t) &\geq \Big(1-\log\big(\frac{t+1}t(1-e^{-t})\big)\Big)
           (e^{\frac{t}2}-e^{-\frac{t}2})-\frac{e^{t}-1}{e^t}\\
       &= \Big(1-\log\big(\frac{t+1}t(1-e^{-t})\big)-e^{-\frac{t}2}\Big)
           (e^{\frac{t}2}-e^{-\frac{t}2}).
\end{align*}
Since $e^{\frac{t}2}-e^{-\frac{t}2} > 0 $, we have to check that
\begin{align*}
   \widetilde{C}(t)&:=1-\log\Big(\frac{t+1}t(1-e^{-t})\Big)-e^{-\frac{t}2}\\
                   &=1-\log (t+1)+\log t-\log (e^{t}-1)+t-e^{-\frac{t}{2}}
                   >0\,.
\end{align*}
Thus $ \widetilde{C}(0)=1-\log 1-e^0=0 $
(note that $ \displaystyle \lim_{t\to 0}\dfrac{1-e^{-t}}{t}=1 $),
and
\begin{align*}
    \widetilde{C}'\left( t\right)
     & =\frac{1}{t}-\frac{1}{t+1}-\frac{e^{t}}{e^{t}-1}+1
       +\frac{1}{2}e^{-\frac{t}{2}}\\
     &=\frac{2\left( e^{t}-1\right)
       -2t\left( t+1\right) +t(t+1)\left( e^{\frac{t}{2}}
       -e^{-\frac{t}{2}}\right) }{2t\left( t+1\right) \left( e^{t}-1\right)}.
\end{align*}
Applying $ 2t(t+1)(e^t-1)>0 $, we need to check that
$$
 \widehat{C}(t)
     :=2\left( e^{t}-1\right) -2t\left( t+1\right)
       +t(t+1)\left( e^{\frac{t}{2}}-e^{-\frac{t}{2}}\right)>0.
$$
Then, $\widehat{C}(0)=0 $ and
$$
    \widehat{C}'(t)=2e^{t}-2\left( 2t+1\right)
       +\left(2t+1\right) \left( e^{\frac{t}{2}}-e^{-\frac{t}{2}}\right)
       +\frac{t\left(t+1\right) }{2}\left( e^{\frac{t}{2}}
       +e^{-\frac{t}{2}}\right).
$$
Similarly, $ \widehat{C}'(0)=0 $ and
\begin{align*}
    \widehat{C}''(t)
    = &2\left( e^{t}-1\right)
      +\left( e^{\frac{t}{2}}+e^{-\frac{t}{2}}-2\right)
      +2t\left( e^{\frac{t}{2}}+e^{-\frac{t}{2}}\right)\\
      &+\frac{t\left( t+1\right)+8}{4}
       \left( e^{\frac{t}{2}}-e^{-\frac{t}{2}}\right).
\end{align*}
Therefore, $ \widehat{C}''(t)>0 $,
since the coefficient of each term is positive
for $ t>0 $.
Then, $ \widehat{C}'(t)>0 $, since
$ \widehat{C}'(t) $ is increasing function with $ \widehat{C}'(0)=0 $.
Similarly, $ \widehat{C}(t)>0 $.

Hence
$\widetilde{C}\left( t\right)
 >\underset{s\searrow 0}{\lim }\widetilde{C}\left( s\right) =0$
for all $t>0$.

In conclusion,  we have $ C(t)>0 $ for each $ t>0 $.
Finally, we have $ E_t(c)>0 \ (c>0) $ for each $ t>0 $,
and we have the assertion.}
\qed
\end{nonsec}

\textbf{Acknowledgments.}

 This work was partially supported by JSPS KAKENHI
    Grant Number 19K03531 and by JSPS Grant BR171101.


\nocite{wv}

\bibliographystyle{spmpsci}      


\end{document}